%

\documentclass[aap,MSNbibl,seceqn,dvips]{arximspdf}
\usepackage{graphicx}
%

\doi{10.1214/12-AAP880} 
\volume{23}
\issue{4}
\pubyear{2013}
\firstpage{1544}
\lastpage{1583}

\makeatletter

\newcommand{\rright}{\right}
\newcommand{\lleft}{\left}
\newcommand{\rrvert}{\vert}
\newcommand{\llvert}{\vert}

\newtheorem{theorem}{Theorem}[section]
\newtheorem{prop}[theorem]{Proposition}
\newtheorem{lem}[theorem]{Lemma}
\newtheorem{cor}[theorem]{Corollary}

\newproclaim{remark}[theorem]{Remark}
\newproclaim{defi}[theorem]{Definition}

\newcommand{\xrightarrow}[1]{\,\mathop{\hbox to 1cm{\rightarrowfill}}_{#1}\,}
\newcommand{\xxrightarrow}[1]{\mathop{\hbox to 1cm{\rightarrowfill}}_{#1}}

\renewcommand{\subset}{\subseteq}
\newcommand{\E}{\mathbb{E}}
\newcommand{\EEo}{\mathbf{E}^\omega}
\newcommand{\ee}{\mathbf{e}}
\newcommand{\N}{\mathbb{N}}
\renewcommand{\L}{\mathcal{L}}
\newcommand{\LL}{\mathbb{L}}
\newcommand{\one}{\mathbf{1}}
\newcommand{\R}{\mathbb{R}}
\newcommand{\Z}{\mathbb{Z}}
\renewcommand{\P}{\mathbb{P}}
\newcommand{\PP}{\mathbf{P}}
\newcommand{\PPo}{\mathbf{P}^\omega}
\newcommand{\eps}{\varepsilon}
\renewcommand{\d}{{\mathrm{d}}}
\newcommand{\mfk}{\mathfrak}
\newcommand{\na}{\nabla}
\newcommand{\Ah}{A_{\mathrm{hom}}}
\newcommand{\Ahd}{A_{\mathrm{hom}}^{\mathrm{disc}}}
\newcommand{\om}{\omega}

\newcommand{\V}{\mathsf{V}}

\makeatother

\begin{document}
\begin{frontmatter}

\title{Quantitative version of the Kipnis--Varadhan theorem and Monte
Carlo approximation of homogenized coefficients\thanksref{T1}}
\runtitle{Monte Carlo approximation of homogenized coefficients}

\thankstext{T1}{Supported by INRIA through the ``Action de recherche
collaborative''
DISCO, by Ministry of Higher Education and Research,
Nord-Pas de Calais Regional Council and FEDER through the ``Contrat de
Projets \'{E}tat R\'{e}gion (CPER) 2007--2013.''}

\begin{aug}
\author[A]{\fnms{Antoine} \snm{Gloria}\corref{}\ead[label=e1]{agloria@ulb.ac.be}}
\and
\author[B]{\fnms{Jean-Christophe} \snm{Mourrat}\ead[label=e2]{jean-christophe.mourrat@epfl.ch}}
\runauthor{A. Gloria and J.-C. Mourrat}
\affiliation{INRIA and EPFL}
\address[A]{Universit\'e Libre de Bruxelles (ULB)\\
Brussels\\
Belgium\\
and\\
Project--team SIMPAF\\
Inria Lille - Nord Europe\\
Villeneuve d'Ascq\\
France\\
\printead{e1}}
\address[B]{EPFL\\
Institut de Math\'{e}matiques\\
Station 8, 1015 Lausanne\\
Switzerland\\
\printead{e2}} 
\end{aug}

\received{\smonth{11} \syear{2011}}
\revised{\smonth{4} \syear{2012}}

%
\begin{abstract}
This article is devoted to the analysis of a Monte Carlo method to
approximate effective coefficients in stochastic homogenization of
discrete elliptic equations. We consider the case of independent and
identically distributed coefficients, and adopt the point of view of
the random walk in a random environment. Given some final time $t>0$, a
natural approximation of the homogenized coefficients is given by the
empirical average of the final squared positions re-scaled by $t$ of
$n$ independent random walks in $n$ independent environments. Relying
on a quantitative version of the Kipnis--Varadhan theorem combined with
estimates of spectral exponents obtained by an original combination of
PDE arguments and spectral theory, we first give a sharp estimate of
the error between the homogenized coefficients and the expectation of
the re-scaled final position of the random walk in terms of $t$. We
then complete the error analysis by quantifying the fluctuations of the
empirical average in terms of $n$ and $t$, and prove a large-deviation
estimate, as well as a central limit theorem. Our estimates are
optimal, up to a logarithmic correction in dimension $2$.
\end{abstract}

%
\begin{keyword}[class=AMS]
\kwd{35B27}
\kwd{60K37}
\kwd{60H25}
\kwd{65C05}
\kwd{60H35}
\kwd{60G50}
\end{keyword}
\begin{keyword}
\kwd{Random walk}
\kwd{random environment}
\kwd{stochastic homogenization}
\kwd{effective coefficients}
\kwd{Monte Carlo method}
\kwd{quantitative estimates}
\end{keyword}

\end{frontmatter}

\section{Main result and structure of the proof}\label{sec1}

\subsection{Main result}\label{sec1.1}

We consider the discrete elliptic operator $-\nabla^*\cdot A\nabla$,
where $\nabla^*\cdot$ and $\nabla$
are the discrete backward divergence and forward gradient,
respectively. For all $x \in\Z^d$, $A(x)$ is the diagonal
matrix whose entries are the conductances $\omega_{x,x+\ee_i}$ of the
edges $(x,x+\ee_i)$ starting at $x$, where $(\ee_i)_{i\in\{1,\ldots,d\}}$
denotes\vspace*{1pt} the canonical basis of $\R^d$. Let $\mathbb{B}$ denote the
set of \textit{unoriented} edges of $\Z^d$. We call the family of
conductances $\omega= (\omega_e)_{e \in\mathbb{B}}$ the \textit
{environment}. This environment is \textit{symmetric} in the sense that
for all $x,y\in\Z^d$ with $|x-y|=1$, we have
$e=(x,y)=(y,x)$, so that $\omega_{x,y}=\omega_{y,x}=\omega_e$. The
environment $\omega$ is random, and we write $\P$ for its
distribution (with corresponding expectation~$\E$). We make the
following assumptions:
\begin{longlist}[(H3)]
\item[(H1)] the measure $\P$ is invariant under translations,
\item[(H2)] the conductances are
i.i.d.,\setcounter{footnote}{1}\footnote{(H2) obviously implies (H1) in
the present form. Yet for most qualitative (and some quantitative)
results (H2) can be weakened and may not imply (H1) any longer.}
\item[(H3)] there exists $0 < \alpha< \beta$ such that $\alpha\le
\omega_e \le\beta$ almost surely.
\end{longlist}

Under these conditions, standard homogenization results ensure
that there exists some \textit{deterministic} symmetric matrix $\Ah$
such that the solution operator of the deterministic continuous
differential operator $- \nabla\cdot\Ah\nabla$ describes the large
scale behavior of the solution operator
of the random discrete differential operator $-\nabla^*\cdot A\nabla$
almost surely [for this statement, (H2) can in fact be replaced by the
weaker assumption that the measure $\P$ is ergodic with respect to the
group of translations] (see~\cite{Kunnemann-83}).

The operator $-\na^* \cdot A \na$ is the infinitesimal generator of a
stochastic process $(X(t))_{t \in\R_+}$ which can be defined as
follows. Given an environment $\omega$, it is the Markov process whose
jump rate from a site $x \in\Z^d$ to a neighboring site $y$ is given
by $\omega_{x,y}$. We write $\PPo_x$ for the law of this process
starting from $x \in\Z^d$.

It is proved in~\cite{Kipnis-Varadhan-86} that under the averaged
measure $\P\PPo_0$, the re-scaled process $\sqrt{\eps} X(\eps^{-1}
t)$ converges in law, as $\eps$ tends to $0$, to a Brownian motion
whose infinitesimal generator is $- \nabla\cdot\Ah\nabla$, or in
other words, a Brownian motion with covariance matrix $2 \Ah$ (see
also~\cite{aks,Kunnemann-83,kozlov} for prior results). We will use
this fact to construct computable approximations of $\Ah$. As proved
in~\cite{DeMasi-Ferrari-89}, this invariance principle holds as soon
as (H1) is true, (H2) is
replaced by the ergodicity of the measure $\P$ and (H3) by the
integrability of the conductances. Under the assumptions (H1)--(H3),
\cite{sid} strengthens this result in another direction, showing that
for almost every environment, $\sqrt{\eps} X(\eps^{-1} t)$ converges
in law under $\PPo_0$ to a Brownian motion with covariance matrix $2
\Ah$. This has been itself extended to environments which do not
satisfy the uniform ellipticity condition (H3) (see \cite
{berbisk,matpiat,bisk,mat,bardeu}).

Let $(Y(t))_{t \in\N}$ denote the sequence of consecutive sites
visited by the random walk $(X(t))_{t\in\R_+}$ [note
that the ``times'' are different in nature for $X(t)$ and $Y(t)$]. This
sequence is itself a Markov chain that satisfies for any two neighbors
\mbox{$x,y \in\Z^d$},
\[
\PPo_x\bigl[Y(1) = y\bigr] = \frac{\omega_{x,y}}{p_\omega(x)},
\]
where $p_\omega(x) = \sum_{|z| = 1} \omega_{x,x+z}$. We simply write
$p(\omega)$ for $p_\omega(0)$. Let us introduce a ``tilted'' version
of the law $\P$ on the environments, that we write $\tilde{\P}$ and
define by
%
%
\begin{equation}
\label{deftdP}
\d\tilde{\P}(\omega) = \frac{p(\omega)}{\E[p]} \,\d\P(\omega).
\end{equation}
The reason why this measure is natural to consider is that it makes the
environment seen from the position of the random walk $Y$ a stationary process
[see (\ref{defenvpart}) for a definition of this process].

Interpolating between two integers by a straight line, we can think of
$Y$ as a continuous function on $\R_+$. With this in mind, it is also
true that there exists a matrix $\Ahd$ such that, as $\eps$ tends to
$0$, the re-scaled
process $\sqrt{\eps} Y(\eps^{-1} t)$ converges in law under $\tilde{\P
}\PPo_0$ to a Brownian motion with covariance matrix $2 \Ahd$.
Moreover, $\Ahd$ and $\Ah$ are related by (see \cite
{DeMasi-Ferrari-89}, Theorem 4.5(ii))
%
%
\begin{equation}
\label{relAhAhd}
\Ah= \E[p] \Ahd= 2d\E[\omega_e] \Ahd.
\end{equation}

Given that the numerical simulation of $Y$ saves some operations
compared to the simulation of $X$ (there is no waiting time to compute,
and the running time is equal to the number of steps), we will focus on
approximating $\Ahd$. More precisely, we fix once and for all some
$\xi\in\R^d$ with $|\xi|=1$, and define
%
%
\begin{eqnarray}
\label{defsigmat}
\sigma_t^2 &=& t^{-1} \tilde{
\E}\EEo_0\bigl[\bigl(\xi\cdot Y(t)\bigr)^2\bigr],
\\
\label{defsigma}
\sigma^2 &=& 2 \xi\cdot\Ahd\xi= \frac{2 \xi\cdot\Ah\xi}{\E[p]}.
\end{eqnarray}
It follows from results of~\cite{Kipnis-Varadhan-86} (or \cite
{DeMasi-Ferrari-89}, Theorem 2.1) that $\sigma_t^2$ tends to $\sigma^2$
as $t$ tends to infinity.
We now describe a Monte Carlo method to approximate~$\sigma_t^2$.
Using the definition of the tilted measure (\ref{deftdP}), one can see that
%
%
\begin{equation}
\label{enlevetilt} \sigma_t^2 = \frac{\tilde{\E}\EEo_0[(\xi\cdot
Y(t))^2]}{t} =
\frac
{\E\EEo_0[p(\omega) (\xi\cdot Y(t))^2]}{t \E[p]}.
\end{equation}
Assuming that we have easier access to the measure $\P$ than to the
tilted~$\tilde{\P}$, we prefer to base our Monte Carlo procedure on the
right-hand side of the second
identity in (\ref{enlevetilt}).
Let $Y^{(1)}, Y^{(2)},\ldots$ be independent random walks evolving in
the environments $\omega^{(1)}, \om^{(2)},\ldots\,$, respectively.
We write $\PP^{\overline{\om}}_0$ for their joint distribution, all random
walks starting from $0$, where $\overline{\om}$ stands for $(\om
^{(1)},\om
^{(2)},\ldots)$. The family of environments $\overline{\om}$ is itself
random, and we let $\P^\otimes$ be the product distribution with
marginal $\P$. In other words, under $\P^\otimes$, the environments
$\om^{(1)}, \om^{(2)},\ldots$ are independent and distributed
according to~$\P$. Our computable approximation of $\sigma_t^2$ is
defined by
%
%
\begin{equation}
\label{defhatA} \hat{A}_n(t) = \frac{p(\omega^{(1)}) (\xi\cdot
Y^{(1)}(t))^2 +
\cdots+ p(\omega^{(n)}) (\xi\cdot Y^{(n)}(t))^2}{nt \E[p]}.
\end{equation}
In $\hat{A}_n(t)$, the expectation $\E[p] = 2d\E[\omega_e]$ comes
into play. This expectation can be easily computed, so we assumed that
we did so beforehand.

The main result of this paper is the following optimal bounds on the
distribution of the error $|\hat{A}_n(t)- \sigma^2|$.
%
%
\begin{theorem}\label{thcomplete}
Under the assumptions \textup{(H1)--(H3)}, there exist $C,c > 0$ such
that, for any $n \in\N^*$, any $\eps> 0$ and any $t$ large enough,
%
%
\begin{equation}
\label{eqcomplete} \P^\otimes\PP^{\overline{\om}}_0 \bigl[ \bigl|
\hat{A}_n(t) - \sigma^2 \bigr| \ge\bigl(C\mu_d(t)+
\eps\bigr) /t \bigr] \le\exp\biggl( - \frac{n
\eps^2}{c t^2} \biggr),
\end{equation}
where $\sigma^2$ and $\hat{A}_n(t)$ are defined, respectively, in
(\ref{defsigma}) and (\ref{defhatA}), and
\[
\mu_d(t)=\cases{ \ln^q t, &\quad for $d = 2$,
\cr
1, &\quad for $d > 2$,}
\]
for some $q>0$ depending only on $\alpha$ and $\beta$.
\end{theorem}
This result precisely quantifies the convergence rate of a method
proposed by Papanicolaou in~\cite{Papanicolaou-83} in the beginning of
the eighties
to approximate the homogenized coefficients $\Ah$ numerically.

For completeness of the analysis we also prove a central limit theorem
(and identify the limiting variance) for the quantity $\sqrt{n(t)}
(\hat A_{n(t)}(t)-\sigma_t^2)$
for all $n\dvtx\N\to\N$ such that $n(t)$ tends to infinity with $t$.

Let us quickly discuss the sharpness of these results.
If $A$ was a periodic matrix (or even a constant matrix) we would get
the same estimate as in Theorem~\ref{thcomplete}, except in dimension
2 for which no logarithmic correction would be needed
[in the setting of Theorem~\ref{thcomplete}, we conjecture that $q=1$
is the optimal exponent in~(\ref{eqcomplete})].
Numerical tests illustrating (\ref{eqcomplete}) for $d=2$ are
reported and commented on in Section~\ref{sec6} of this article.

\subsection{Structure of the proof}\label{sec1.2} Although the result of
Theorem~\ref{thcomplete} is purely probabilistic (we estimate a distribution)
its proof involves both nontrivial probabilistic arguments (martingale
decomposition and Kipnis--Varadhan theory, large deviation estimates) and
nontrivial arguments of elliptic theory (Harnack inequality, De
Giorgi--Nash--Moser theory and $L^p$-theory).
What allows us to combine these arguments is spectral theory.
This makes the overall structure of the proof interesting and rather unusual.

The starting point of the proof is the observation that
\[
\bigl|\hat{A}_n(t) - \sigma^2 \bigr| \le\bigl|\hat{A}_n(t)-
\sigma_t^2 \bigr| + \bigl|\sigma_t^2-
\sigma^2 \bigr|.
\]
The result then follows from the following two estimates:
%
%
\begin{eqnarray}
\label{eqcomplete-syst}
\bigl|\sigma_t^2- \sigma^2 \bigr|&\leq& C
\frac{\mu_d(t)}{t},
\\
\label{eqcomplete-rand}
\P^\otimes\PP^{\overline{\om}}_0 \bigl[ \bigl|\hat{A}_n(t)-
\sigma_t^2 \bigr| \ge\eps/t \bigr]&\leq& \exp\biggl( -
\frac{n \eps^2}{c t^2} \biggr)
\end{eqnarray}
(see Theorems~\ref{syserr} and~\ref{randfluc}).
The second estimate is a large deviation estimate. Its proof is
standard once we are given sharp upper bounds on the transition
probabilities of the random walk in the random environment---which are
also by now standard under assumption (H3). The proof is given in
Section~\ref{secrandfluc} for completeness.
The central limit theorem for the quantity $\sqrt{n(t)} (\hat
A_{n(t)}(t)-\sigma_t^2)$ is given in Proposition~\ref{pclt} and
proved in Section~\ref{secclt}.

The core of this article is the estimate (\ref{eqcomplete-syst}). We
call its left-hand side the systematic error.
As proved in the celebrated paper~\cite{Kipnis-Varadhan-86} by Kipnis
and Varadhan
(see also~\cite{DeMasi-Ferrari-89}), the systematic error vanishes as
$t$ goes to infinity as soon as
the measure $\P$ is ergodic under translations.
The strategy to prove this result is to find a decomposition of
$Y(t)\cdot\xi$ into a martingale plus a remainder, in such a way that
the remainder term becomes negligible in the limit, and conclude using
the orthogonality of the increments of the martingale and ergodicity.
The approach taken up by~\cite{Kipnis-Varadhan-86} is based on the
spectral analysis of the (self-adjoint) operator of the environment
viewed by the particle. More precisely, it is shown that in order for
this decomposition with negligible remainder to exist, it suffices that
the spectral measure of this operator, once projected on the ``local
drift'' $\mfk{d}$ [see (\ref{deflocaldrift})], satisfies some
integrability condition (IC) at the edge of the spectrum. Condition
(IC) is then seen to be equivalent to asking $\mfk{d}$ to belong to
the function space $H^{-1}$, a fact which is automatically true due to
certain symmetry considerations that were systematized in \cite
{DeMasi-Ferrari-89}.

Our proof of (\ref{eqcomplete-syst}) consists of two steps.
We first make the argument of Kipnis and Varadhan quantitative in
Section~\ref{secquantKV}. That is, we show that stronger
integrability conditions than (IC) on the spectral measure can be
turned into quantitative estimates on the systematic error---this is a
general result
of independent interest.

In the second step, addressed in Section~\ref{secsyserr}, we prove
that indeed (IC) can be strengthened to higher integrability
properties, provided ergodicity is replaced by the stronger assumption
that the conductances are i.i.d., the hypothesis (H2).
This result is the main achievement of this article.
In~\cite{Gloria-Mourrat-10}, we had taken advantage of spectral theory
to turn results of~\cite{Gloria-Otto-09b} into bounds on spectral exponents.
In the present paper we go the other way around, and make systematic
use of the interplay between estimates on the spectral measure and
iterates of the
elliptic operator.
There is a twist in the analysis at this point.
In~\cite{Gloria-Mourrat-10} spectral theory is somehow only used at
the end of the argument
to re-phrase in terms of spectral exponents the results on systematic
errors obtained by PDE arguments in~\cite{Gloria-Otto-09b}.
Here spectral theory enters the proof itself and is used in combination
with PDE arguments.
This approach has the advantage of revealing the very nice structure of
the problem under consideration.

Let us point out that although the results of this paper are proved
under assumptions (H1)--(H3), the assumption (H2) on the statistics of
$\omega$
is only used to obtain the variance estimate of~\cite{Gloria-Otto-09},
Lemma 2.3. In particular, (H2) can be weakened as follows:
\begin{itemize}
\item the distribution of $\omega_{z,z+\mathbf{e}_i}$ may, in
addition, depend on $\mathbf{e}_i$,
\item independence can be replaced by finite correlation length
$C_L>0$, that is, for all $e,e'\in\mathbb{B}$, $\omega_e$ and
$\omega_{e'}$ are
independent if $|e-e'|\geq C_L$.
\end{itemize}

\textit{Notation.} So far we have already introduced the
probability measures $\PPo_0$ (distribution of $Y$), $\PP^{\overline{\om
}}_0$ (distribution of $Y^{(1)},Y^{(2)},\ldots$), $\P$
[i.i.d. distribution for $\omega= (\omega_e)_{e \in\mathbb{B}}$],
$\tilde{\P}$ [tilted measure defined in (\ref{deftdP})] and $\P^\otimes$
(product distribution of $\overline{\om}$ with marginal $\P$).
It will be convenient to define $\tilde{\P}^\otimes$ as the product
distribution of $\overline{\om}$ with marginal $\tilde{\P}$. For
convenience, we write $\P_0$ as a shorthand notation for $\P\PPo_0$,
$\tilde{\P}_0$ for $\tilde{\P}\PPo_0$, $\P^\otimes_0$ for $\P^\otimes
\PP^{\overline{\om}}_0$ and $\tilde{\P}^\otimes_0$ for $\tilde{\P
}^\otimes
\PP^{\overline{\om}}_0$. The corresponding expectations are written
accordingly, replacing ``P'' by ``E'' with the appropriate typography.
We write $|\cdot|$ for the Euclidian norm of $\R^d$.

Finally, $\lesssim$ and $\gtrsim$ stand, respectively, for $\le$ and
$\ge$ up to multiplicative constants (which depend only on the bounds
$\alpha$ and $\beta$ on the conductances
and the dimension~$d$, if not otherwise stated).

%
%
\section{Quantitative version of the Kipnis--Varadhan theorem}\label{sec2}
\label{secquantKV}

The Kipnis--Varadhan theorem~\cite{Kipnis-Varadhan-86} concerns
additive functionals of reversible Markov processes. It gives
conditions for such additive functionals to satisfy an invariance
principle. The proof of the result relies on a decomposition of the
additive functional as the sum of a martingale term plus a remainder
term, the latter being shown to be negligible. In this section, which
can be read independently of the rest of the paper, we give conditions
that enable us to obtain some quantitative bounds on this remainder term.

We consider discrete and continuous times simultaneously. Let $(\eta
_t)_{t \ge0}$ be a Markov process defined on some measurable state
space $\aleph$ (here, $t\ge0$ stands either for $t \in\N$ or for $t
\in\R_+$). We denote by $P_x$ the distribution of the process started
from $x \in\aleph$, and by $E_x$ the associated expectation. We
assume that this Markov process is reversible and ergodic with respect
to some probability measure~$\nu$. We write $P_\nu$ for the law of
the process started from the distribution $\nu$, and $E_\nu$ for the
associated expectation.

To the Markov process is naturally associated a semi-group $(P_t)_{t
\ge0}$ defined, for any $f \in\LL_2(\nu)$, by
\[
P_t f (x) = E_x\bigl[f(\eta_t)\bigr].
\]
Each $P_t$ is a self-adjoint contraction of $\LL^2(\nu)$. In the
continuous-time case, we assume further that the semi-group is strongly
continuous, that is to say, that $P_t f$ converges to $f$ in $\LL^2(\nu
)$ as $t$ tends to $0$, for any $f \in\LL^2(\nu)$. We let $L$
be the $\LL^2(\nu)$-infinitesimal generator of the semi-group. It is
self-adjoint in $\LL^2(\nu)$, and we fix the sign convention so that
it is a positive operator (i.e., $P_t = e^{-t L}$).\vadjust{\goodbreak}

Note that, in general, one can see using spectral analysis that there
exists a projection $\overline{P}$ such that $P_t f$ converges to
$\overline{P}
f$ as $t$ tends to $0$, $t > 0$. Changing $\LL^2(\nu)$ to the image
of the projection $\overline{P}$, and $P_0$ for $\overline{P}$, one
recovers a
strongly continuous semigroup of contractions, and one can still carry
the analysis below replacing $\LL^2(\nu)$ by the image of $\overline{P}$
when necessary.

In discrete time, we set $L = \mathrm{Id} - P_1$. Again, $L$ is a
positive self-adjoint operator on $\LL^2(\nu)$. Note that we slightly
depart from the custom of defining the generator as $P_1$ in order to
match more closely the continuous-time situation.

We denote by $\langle\cdot, \cdot\rangle$ the scalar product in
$\LL^2(\nu)$. For any function $f \in\LL^2(\nu)$ we define the
\textit{spectral measure} of $L$ projected on the function $f$ as the
measure $e_f$ on $\R_+$ that satisfies, for any bounded continuous
$\Psi\dvtx\R_+ \to\R$, the relation
%
%
\begin{equation}
\label{defef} \bigl\langle f, \Psi(L) f \bigr\rangle= \int\Psi(\lambda)
\,\d
e_f(\lambda).
\end{equation}

The Dirichlet form associated to $L$ is given by
%
%
\begin{equation}
\label{defnorm1} \|f\|_1^2 = \int\lambda\,\d
e_f(\lambda).
\end{equation}
We denote by $H^1$ the completion of the space $\{ f \in\LL^2(\nu)\dvtx
\|f\|_1 < + \infty\}$ with respect to\vspace*{1pt} this $\| \cdot\|_1$ norm, taken
modulo functions of zero $\| \cdot\|_1$ norm. This turns $(H^1,\|
\cdot\|_1)$ into a Hilbert space, and we let $H^{-1}$ denote its dual.
One can identify $H^{-1}$ with the completion of the space $\{ f \in
\LL^2(\nu)\dvtx\|f\|_{-1} < + \infty\}$ with respect to the norm $\|
\cdot\|_{-1}$ defined by
\[
\| f \|_{-1}^2 = \int\lambda^{-1} \,\d
e_f(\lambda).
\]
Indeed, for all $f \in\LL^2(\nu)$, the linear form
\[
\cases{\bigl(\LL^2(\nu) \cap H^1, \| \cdot
\|_1\bigr) \to\R,
\vspace*{1pt}\cr
\phi\mapsto\langle f, \phi\rangle,}
\]
has norm $\|f\|_{-1}$, and thus defines an element of $H^{-1}$ (with
norm $\|f\|_{-1}$) iff $\|f\|_{-1}$ is finite. The notion of spectral
measure introduced in (\ref{defef}) for functions of $\LL^2(\nu)$
can be extended to elements of $H^{-1}$. Indeed, let $\Psi\dvtx\R_+ \to
\R$ be a continuous function such that $\Psi(\lambda) = O(\lambda
^{-1})$ as $\lambda\to+\infty$. One can check that the map
\[
\cases{\bigl(\LL^2(\nu) \cap H^{-1}, \| \cdot
\|_{-1}\bigr) \to H^1,
\vspace*{1pt}\cr
f \mapsto\Psi(L) f,}
\]
extends to a bounded linear map on $H^{-1}$. One can then define the
spectral measure of $L$ projected on the function $f$ as the measure
$e_f$ such that for any continuous $\Psi$ with $\Psi(\lambda) =
O(\lambda^{-1})$, (\ref{defef}) holds. With a slight abuse of
notation, for all $f \in H^{-1}$ and $g \in H^1$, we write $\langle f,
g \rangle$ for the $H^{-1}-H^1$ duality product between $f$ and $g$.

For any $f \in H^{-1}$, we define $(Z_f(t))_{t \ge0}$ as
%
%
\begin{equation}
\label{defZf} Z_f(t) = \int_0^t
f(\eta_s) \,\d s \quad\mbox{or}\quad Z_f(t) = \sum
_{s = 0}^{t-1} f(\eta_s),
\end{equation}
according to whether we consider the continuous or the discrete time
cases. In the continuous case, the meaning of (\ref{defZf}) is unclear
a priori. Yet
it is proved in~\cite{DeMasi-Ferrari-89}, Lemma 2.4, that for any $t
\ge0$ the map
\[
\cases{\LL^2(\nu) \cap H^{-1} \to\LL^2(P_\nu),
\cr
f \mapsto Z_f(t),}
\]
can be extended by continuity to a bounded linear map on $H^{-1}$, and
moreover, that (\ref{defZf}) coincides with the usual integral as soon
as $f \in\LL^1(\nu)$. The following theorem is due to \cite
{DeMasi-Ferrari-89}, building on previous work of~\cite{Kipnis-Varadhan-86}.
%
%
\begin{theorem}
\label{kv}
\textup{(i)} For all $f \in H^{-1}$, there exists $(M_t)_{t \ge0}$, $(\xi
_t)_{t \ge0}$ such that $Z_f(t)$ defined in (\ref{defZf}) satisfies
the identity $Z_f(t) = M_t + \xi_t$, where $(M_t)$ is a
square-integrable martingale with stationary increments under $P_\nu$
(and the natural filtration), and $(\xi_t)$ is such that
%
%
\begin{equation}
\label{exitendvers0} t^{-1} E_\nu\bigl[(
\xi_t)^2\bigr] \xrightarrow{t \to+ \infty} 0.
\end{equation}
As a consequence, $t^{-1/2} Z_f(t)$ converges in law under $P_\nu$ to
a Gaussian random variable of variance $\sigma^2(f) < + \infty$ as
$t$ goes to infinity, and
%
%
\begin{equation}
\label{el2conv} t^{-1} E_\nu\bigl[\bigl(Z_f(t)
\bigr)^2\bigr] \xrightarrow{t \to+ \infty} \sigma^2(f).
\end{equation}

\textup{(ii)} If, moreover, $f \in\LL^1(\nu)$ and, for some $\overline{t} >
0$, $\sup_{0 \le t \le\overline{t}} |Z_f(t)|$ is in $\LL^2(\nu)$, then
the process $t \mapsto\sqrt{\eps} Z_f(\eps^{-1} t)$ converges in
law under $P_\nu$ to a Brownian motion of variance $\sigma^2(f)$ as
$\eps$ goes to $0$.
\end{theorem}

\textit{Remarks.}
The additional conditions appearing in statement (ii) are automatically
satisfied in discrete time, due to the fact that $H^{-1} \subset\LL
^2(\nu)$ in this case. In the continuous-time setting and when $f \in
\LL^1(\nu)$, the process $t \mapsto Z_f(t)$ is almost surely
continuous, and $\sup_{0 \le t \le\overline{t}} |Z_f(t)|$ is indeed a
well-defined random variable.

Under some additional information on the spectral measure of $f$, we
can estimate the rates of convergence in the limits (\ref
{exitendvers0}) and (\ref{el2conv}). For any $\gamma> 1$ and $q \ge
0$, we say that the spectral exponents of a function $f \in H^{-1}$ are
at least $(\gamma,-q)$ if
%
%
\begin{equation}
\label{specexp} \int_0^\mu\d e_f(
\lambda) = O \bigl( \mu^{\gamma} \ln^{q}\bigl(\mu^{-1}
\bigr) \bigr) \qquad(\mu\to0).
\end{equation}
Note that the phrasing is consistent, since if $(\gamma',-q') \le
(\gamma,-q)$ for the lexicographical order, and if the spectral
exponents of $f$ are at least $(\gamma,-q)$, then they are at least
$(\gamma',-q')$. In~\cite{Mourrat-10}, it was found more convenient
to consider, instead of~(\ref{specexp}), a condition of the following form:
%
%
\begin{equation}
\label{specexp2} \int_0^\mu
\lambda^{-1} \,\d e_f(\lambda) = O \bigl(
\mu^{\gamma-1} \ln^{q}\bigl(\mu^{-1}\bigr) \bigr) \qquad(\mu
\to0).
\end{equation}
One can easily check that conditions (\ref{specexp}) and (\ref
{specexp2}) are equivalent. Indeed, on the one hand, one has the
obvious inequality
\[
\int_0^\mu\d e_f(\lambda) \le\mu
\int_0^\mu\lambda^{-1} \,\d
e_f(\lambda),
\]
which shows that (\ref{specexp2}) implies (\ref{specexp}). On the
other hand, one may perform a kind of integration by parts (use
Fubini's theorem),
\begin{eqnarray*}
\int_0^\mu\lambda^{-1} \,\d
e_f(\lambda) & = & \int_0^\mu\int
_{\delta= \lambda}^{+\infty} \delta^{-2} \,\d\delta\,\d
e_f(\lambda)
\\
& = & \int_{\delta= 0}^{+\infty} \delta^{-2} \int
_{\lambda=
0}^{\delta\wedge\mu} \d e_f(\lambda) \,\d\delta
\end{eqnarray*}
and obtain the converse implication by examining separately the
integration over $\delta$ in $[0,\mu)$ and in $[\mu,+\infty)$.

For all $\gamma> 1$ and $q \ge0$, we set
%
%
\begin{equation}
\label{defpsi} \psi_{\gamma,q}(t) = \cases{t^{1-\gamma}
\ln^q(t), &\quad if $\gamma< 2$,
\vspace*{1pt}\cr
t^{-1} \ln^{q+1}(t),
&\quad if $\gamma= 2$,
\vspace*{1pt}\cr
t^{-1}, &\quad if $\gamma> 2$.}
\end{equation}
The quantitative version of Theorem~\ref{kv} is as follows.

%
%
\begin{theorem}
\label{quantkv}
If the spectral exponents of $f \in H^{-1}$ are at least $(\gamma,-q)$,
then the decomposition $Z_f(t) = M_t + \xi_t$ of Theorem~\ref{kv}
holds with the additional property that
\[
t^{-1} E_\nu\bigl[(\xi_t)^2\bigr]
= O\bigl(\psi_{\gamma,q}(t)\bigr) \qquad(t \to+ \infty).
\]
Moreover,
\[
\sigma^2(f) - \frac{E_\nu[Z_f(t)^2]}{t} = O\bigl(\psi_{\gamma,q}(t)
\bigr) \qquad(t \to+ \infty).
\]
\end{theorem}
\begin{pf}
In the continuous-time setting, the argument for the first estimate is
very similar to the one of~\cite{Mourrat-10}, Proposition 8.2, and we
do not repeat the details here.
It is based on the observation that
%
%
\begin{equation}
\label{normxi2} \frac{1}{t} E_\nu\bigl[(\xi_t)^2
\bigr] = 2 \int\frac{1-e^{-\lambda
t}}{\lambda^2 t} \,\d e_f(\lambda).
\end{equation}
One needs to take into account the possible logarithmic terms that
appear in (\ref{specexp2}) and which are not considered in~\cite{Mourrat-10}.
Some care is also needed because we do not assume that $f \in\LL^2(\nu
)$. Yet one can easily replace the bound involving the $\LL^2(\nu)$
norm of $f$ by its $H^{-1}$ norm. The second part of the
statement is given by~\cite{Mourrat-10}, Proposition 8.3.

We now turn to the discrete time setting. In this context, identity
(\ref{normxi2}) should be replaced by
\[
\frac{1}{t} E_\nu\bigl[(\xi_t)^2
\bigr] = 2 \int\frac{1-(1-\lambda
)^t}{\lambda^2 t} \,\d e_f(\lambda).
\]
By definition, $L=\mathrm{Id} - P_1$, where $P_1$ is the semi-group at
time $1$. Hence, the spectrum of $L$ is contained in $[0,2]$.
One can then follow the same computations as before to prove the first
part of Theorem~\ref{quantkv}.

Somewhat surprisingly, the second part of the statement requires
additional attention in the discrete time setting. Indeed, in the
continuous case, the argument of~\cite{Mourrat-10}, Proposition 8.3
(which already appears in~\cite{DeMasi-Ferrari-89}) is that $Z_f(t)$
and $\xi(t)$ are orthogonal in $\LL^2(P_\nu)$, a fact obtained using
the invariance under time symmetry. This orthogonality is only
approximately valid in the discrete-time setting. Indeed, let us recall
that $Z_f(t)$ is given by (\ref{defZf}), while $\xi_t$ is obtained as
the limit in $\LL^2(P_\nu)$ of
\[
- u_\eps(\eta_t) + u_\eps(
\eta_0),
\]
where $u_\eps= (\eps+ L)^{-1} f$. Using time symmetry, what we obtain
is that $\xi_t$ is orthogonal to $(Z_f(t) + f(\eta_t))$.
As a consequence, the cross-product
$
E_\nu[Z_f(t) \xi_t]
$, which is equal to $0$ in the proof of~\cite{Mourrat-10},
Proposition 8.3, is in the present case equal to $- E_\nu[f(\eta_t)
\xi_t]$.
Yet spectral analysis ensures that this term is equal to
\[
\int\frac{1 - (1-\lambda)^t}{\lambda} \,\d e_f(\lambda) = O(1)
\qquad(t \to+\infty),
\]
which is what we need to obtain the second claim of the theorem.
\end{pf}

%
%
\section{The systematic error}\label{sec3}
\label{secsyserr}

We now come back to the analysis of the Monte Carlo approximation of
the homogenized coefficients within assumptions (H1)--(H3).
The aim of this section is to estimate the difference between $\sigma
_t^2$ and the quantity $\sigma^2$ we wish to approximate [both being
defined in (\ref{defsigmat})].
This difference, that we
refer to as the \textit{systematic error} after~\cite{Gloria-Otto-09},
is shown to be of order $1/t$ as $t$ tends to infinity, up to a
logarithmic correction in dimension $2$.
%
%
\begin{theorem}
\label{syserr}
Under assumptions \textup{(H1)--(H3)}, there exists $q \ge0$ such that,
as $t$ tends to infinity,
%
%
\begin{equation}
\label{eqsyserr} \sigma_t^2 - \sigma^2 =
\cases{O \bigl( t^{-1} \ln^{q}(t) \bigr), &\quad if $d = 2$,
\vspace*{1pt}\cr
O
\bigl( t^{-1} \bigr), &\quad if $d >2$.}
\end{equation}
\end{theorem}
Theorem~\ref{syserr} is a discrete-time version of~\cite{Mourrat-10},
Corollary 2.6.
Its proof makes use of an auxiliary process that we now introduce.

Let $(\theta_x)_{x \in\Z^d}$ be the translation group that acts on
the set of environments as follows: for any pair of neighbors $y,z \in
\Z^d$, $(\theta_x \omega)_{y,z} = \omega_{x+y,x+z}$. The
\textit{environment viewed by the particle} is the process defined by
%
%
\begin{equation}
\label{defenvpart} \omega(t) = \theta_{Y(t)} \omega.
\end{equation}
One can check that $(\omega(t))_{t \in\N}$ is a Markov chain whose
generator is given by
%
%
\begin{equation}
\label{eqdef-L} - \L f (\omega) = \frac{1}{p(\omega)} \sum
_{|z| = 1} \omega_{0,z} \bigl(f(\theta_z
\omega) - f(\omega)\bigr),
\end{equation}
so that $\EEo_0[f(\omega(1))] = (I-\L)f(\omega)$. Moreover, the
measure $\tilde{\P}$ defined in (\ref{deftdP}) is reversible and
ergodic for this process~\cite{DeMasi-Ferrari-89}, Lemma 4.3(i). As a
consequence, the operator $\L$ is (positive and) self-adjoint in $\LL
^2(\tilde{\P})$.

The proof of Theorem~\ref{syserr} relies on spectral analysis. For any
function $f \in L^2(\tilde{\P})$, let $e_f$ be the spectral measure of
$\L$ projected on the function $f$. This measure is such that, for any
positive continuous function $\Psi\dvtx[0,+\infty) \to\R_+$, one has
\[
\tilde{\E}\bigl[f \Psi(\L)f\bigr] = \int\Psi(\lambda) \,\d e_f(
\lambda).
\]
For any $\gamma> 1$ and $q \ge0$, we recall that we say that the
spectral exponents of a function $f$ are at least $(\gamma,-q)$ if
(\ref{specexp}) holds.

Let us define the local drift $\mfk{d}$ in direction $\xi$ as
%
%
\begin{equation}
\label{deflocaldrift} \mfk{d}(\omega) = \EEo_0\bigl[\xi\cdot Y(1)
\bigr] = \frac{1}{p(\omega)} \sum_{|z| = 1}
\omega_{0,z} \xi\cdot z.
\end{equation}
As we shall prove at the end of this section, we have the following
bounds on the spectral exponents of $\mfk{d}$.
%
%
\begin{prop}
\label{pspecexp}
Under assumptions \textup{(H1)--(H3)}, there exists $q \ge0$ such that
the spectral exponents of the function $\mfk{d}$ are at least
%
%
\begin{equation}
\label{eqspecexp} \cases{(2,-q), &\quad if $d = 2$,
\vspace*{1pt}\cr
(d/2 + 1,0), &\quad if $3 \le d
\le5$,
\vspace*{1pt}\cr
(4,-1), &\quad if $d = 6$,
\vspace*{1pt}\cr
(4,0), &\quad if $d \ge7$.}
\end{equation}
\end{prop}
Let us see how this result implies Theorem~\ref{syserr}. In order to
do so, we also need the following information that is a consequence of
Proposition~\ref{pspecexp}.
%
%
\begin{cor}
\label{corpoly}
Let
%
%
\begin{equation}
\label{defdt} \mfk{d}_t(\omega) = \EEo_0\bigl[\mfk{d}
\bigl(\omega(t)\bigr)\bigr]\vadjust{\goodbreak}
\end{equation}
be the image of $\mfk{d}$ by the semigroup at time $t$ associated with
the Markov chain $(\omega(t))_{t \in\N}$. There exists $q \ge0$
such that
\[
\tilde{\E}\bigl[(\mfk{d}_t)^2\bigr] = \cases{ O
\bigl(t^{-2} \ln^q(t) \bigr), &\quad if $d = 2$,
\vspace*{2pt}\cr
O
\bigl(t^{-(d/2 + 1)} \bigr), &\quad if $3 \le d \le5$,
\vspace*{2pt}\cr
O \bigl(t^{-4}
\ln(t) \bigr), &\quad if $d = 6$,
\vspace*{2pt}\cr
O \bigl(t^{-4} \bigr), &\quad if $d \ge7$.}
\]
\end{cor}
\begin{pf}
This result is the discrete-time analog of~\cite{Gloria-Mourrat-10},
Corollary 1. It is obtained the same way, noting that
\[
\tilde{\E}\bigl[(\mfk{d}_t)^2\bigr] = \int(1-
\lambda)^{2t} \,\d e_{\mfk
{d}}(\lambda)
\]
and that the support of the measure $e_{\mfk{d}}$ is contained in $[0,2]$.
\end{pf}
We are now in position to prove Theorem~\ref{syserr}.
\begin{pf*}{Proof of Theorem~\ref{syserr}}
The proof has the same structure as for the continuous-time case of
\cite{Mourrat-10}, Proposition 8.4.
Note that~\cite{DeMasi-Ferrari-89}, Theorem 2.1, ensures that
%
%
\begin{equation}
\label{eqdemasi} \lim_{t \to\infty}\sigma_t^2
\stackrel{(\mathrm{def})} {=} \lim_{t \to\infty} t^{-1}{
\E}_0\bigl[\bigl(\xi\cdot Y(t)\bigr)^2\bigr] =
\sigma^2.
\end{equation}
The starting point is the observation that, under $\tilde{\P}_0$, the
process defined by
%
%
\begin{equation}
\label{defNt} N_t = \xi\cdot Y(t) - \sum
_{s = 0}^{t-1} \mfk{d}\bigl(\omega(s)\bigr)
\end{equation}
is a square-integrable martingale with stationary increments.
On the one hand, following (\ref{defZf}), we denote by $Z_\mfk{d}(t)$
the sum appearing in the right-hand side of (\ref{defNt}).
From Proposition~\ref{pspecexp} and Theorem~\ref{quantkv}, we learn
that there exist $\overline{\sigma}$ and $q \ge0$ such that
%
%
\begin{equation}
\label{controlZd} t \overline{\sigma}^2 - \tilde{\E}_0\bigl[
\bigl(Z_{\mfk{d}}(t)\bigr)^2\bigr] = \cases{ O \bigl(
\ln^{q}(t) \bigr), &\quad if $d = 2$,
\vspace*{1pt}\cr
O ( 1 ), &\quad if $d >2$.}
\end{equation}
On the other hand, since $N_t$ is a martingale with stationary increments,
%
%
\begin{equation}
\label{eqmartingale} \tilde{\E}_0\bigl[(N_t)^2
\bigr] = t \tilde{\E}_0\bigl[(N_1)^2\bigr].
\end{equation}
As in the proof of Theorem~\ref{quantkv} in the discrete time case, we
then use that $\xi\cdot Y(t)$ is orthogonal to $(Z_\mfk{d}(t) + \mfk
{d}(\omega(t)))$
to turn (\ref{defNt}) into
%
%
\begin{eqnarray}
\label{decompNt} t^{-1}\tilde{\E}_0\bigl[(N_t)^2
\bigr] &=& t^{-1}\tilde{\E}_0\bigl[\bigl(\xi\cdot Y(t)
\bigr)^2\bigr] + t^{-1}\tilde{\E}_0\bigl[
\bigl(Z_{\mfk{d}}(t)\bigr)^2\bigr]\nonumber\\[-8pt]\\[-8pt]
&&{} + 2t^{-1} \tilde{
\E}_0\bigl[\mfk{d}\bigl(\omega(t)\bigr) \bigl(\xi\cdot Y(t)\bigr)
\bigr].\nonumber
\end{eqnarray}
We already control the left-hand side and the second term of the
right-hand side of (\ref{decompNt}).
In order to quantify the convergence\vadjust{\goodbreak} of $t^{-1}\tilde{\E}_0[(\xi\cdot
Y(t))^2]$ it remains to control the last term.
In particular, provided we show that
%
%
\begin{equation}
\label{controlrest} \tilde{\E}_0\bigl[\mfk{d}\bigl(\omega(t)\bigr)
\bigl(\xi\cdot Y(t)\bigr)\bigr] = \cases{ O \bigl( \ln^{q}(t) \bigr),
&\quad
if $d = 2$,
\cr
O ( 1 ), &\quad if $d >2$,}
\end{equation}
(\ref{decompNt}), (\ref{controlZd}), (\ref{eqmartingale}) and
(\ref{eqdemasi}) imply first that $\sigma^2=\tilde{\E
}_0[(N_1)^2]-\overline
{\sigma}{}^2$,
and then the desired quantitative estimate (\ref{eqsyserr}).
We now turn to (\ref{controlrest}) and write
\begin{eqnarray*}
\tilde{\E}_0\bigl[\mfk{d}\bigl(\omega(t)\bigr) \bigl(\xi\cdot Y(t)
\bigr)\bigr] & = & \sum_{s=0}^{t-1} \tilde{
\E}_0\bigl[\mfk{d}\bigl(\omega(t)\bigr) \bigl(\xi\cdot\bigl(Y(s+1)-Y(s)
\bigr)\bigr)\bigr]
\\
& = & \sum_{s=0}^{t-1} \tilde{
\E}_0\bigl[\mfk{d}_{t-s-1}\bigl(\omega(s+1)\bigr) \bigl(\xi\cdot
\bigl(Y(s+1)-Y(s)\bigr)\bigr)\bigr],
\end{eqnarray*}
where we have used the Markov property at time $s+1$, together with the
definition (\ref{defdt}) of $\mfk{d}_{t-s-1}$.
Using Cauchy--Schwarz\vspace*{1pt} inequality and the stationarity of the process
$(\omega(t))_{t \in\N}$ under $\tilde{\E}_0$, this sum is bounded by
\[
|\xi|^2 \sum_{s=0}^{t-1} {\tilde{
\E}\bigl[(\mfk{d}_{t-s-1})^2\bigr]}^{1/2}.
\]
Estimate (\ref{controlrest}) then follows from Corollary \ref
{corpoly}. The proof of the theorem is complete.
\end{pf*}
Proposition~\ref{pspecexp} is a discrete-time counterpart of \cite
{Gloria-Mourrat-10}, Theorem 5.
In~\cite{Gloria-Mourrat-10}, Theorem 5, however, we had proved, in
addition, that the spectral exponents are at least $(d/2-2,0)$,
which is sharper than the exponents of Proposition~\ref{pspecexp} for $d>10$.
In particular, for $d>10$ the bounds of~\cite{Gloria-Mourrat-10},
Theorem 5, follow from
results of~\cite{Mourrat-10}, whose adaptation to the discrete time
setting is not straightforward.
As shown above, the present statement is sufficient to prove the
optimal scaling of the systematic error, and
we do not investigate further this issue (see, however, Remark \ref
{remopt-spex}).
The proof of Proposition~\ref{pspecexp} is rather involved and one
may wonder whether this is worth the effort in terms of the
application we have in mind, namely, Theorem~\ref{syserr}.
In order to obtain the optimal convergence rate in Theorem~\ref
{syserr} we need
the spectral exponents to be larger than $(2,0)$. Proving that the
exponents are at least $(2,0)$ is rather direct using
results of~\cite{Gloria-Otto-09} (see the first three steps of the
proof of Proposition~\ref{pspecexp}). Yet proving that they are
larger than $(2,0)$ for $d>2$ is as involved as proving
Proposition~\ref{pspecexp} itself.
This is the reason why we display the complete proof of
Proposition~\ref{pspecexp}---although the precise values of the
spectral exponents are not that important
in the context of this paper.\looseness=-1

There are two new features in the proof of Proposition~\ref{pspecexp}
with respect to our previous works:
\begin{itemize}
\item First, the discrete elliptic operator we consider here is
slightly different than the operator considered\vadjust{\goodbreak} in \cite
{Gloria-Otto-09} since
the zero-order term is now random as well---the adaptation of the
results of~\cite{Gloria-Otto-09} is only technical though;
\item The string of arguments is different than in the proof of \cite
{Gloria-Mourrat-10}, Theorem 5.
In particular, the starting point of~\cite{Gloria-Mourrat-10} was an
estimate obtained in~\cite{Gloria-Otto-09b}
based on the crucial use of a covariance estimate.
In~\cite{Gloria-Otto-09b} the main quantity of interest was a
systematic error.
In the present proof the main quantity of interest is the spectral
exponents at the first place.
This twist of points of view allows us to reduce the proof to a
suitable use of the variance estimate
only, and reveals the general structure of the problem.
\end{itemize}
This proof does not only complete the proof of Theorem~\ref{syserr}
but allows us to shed
some new light on our conjecture in~\cite{Gloria-Mourrat-10} on the
optimal values of the spectral exponents (see Remark~\ref{remopt-spex}).

As already mentioned, this proof makes extensive use of tools developed
by the authors, and by Otto.
For the reader's convenience, we recall five useful auxiliary results
from~\cite{Gloria-Otto-09,Gloria-Otto-09b,Gloria-10}: a spectral gap estimate
and bounds on Green's functions.
%
%
\begin{lem}[(\cite{Gloria-Otto-09}, Lemma 2.3)]\label{lemvar-estim}
Let $a=\{a_i\}_{i\in\mathbb{N}}$ be a sequence of i.i.d. random
variables with range $[\alpha,\beta]$.
Let $X$ be a Borel measurable function of $a\in\mathbb{R}^\mathbb
{N}$ (i.e., measurable
w.r.t. the smallest $\sigma$-algebra on $\mathbb{R}^\mathbb{N}$
for which all coordinate functions $\mathbb{R}^\mathbb{N}\ni a\mapsto
a_i\in\mathbb{R}$
are Borel measurable; cf.~\cite{Klenke-08}, Definition~14.4).

Then we have
%
%
\begin{equation}
\label{eqvar-estim} \operatorname{var} [X ] \le\Biggl\langle\sum
_{i=1}^\infty\sup_{a_i} \biggl\llvert
\frac{\partial X}{\partial a_i}\biggr\rrvert^2 \Biggr\rangle
\operatorname{var}
[a_1 ],
\end{equation}
where ${\sup_{a_i}}\llvert\frac{\partial X}{\partial a_i}\rrvert$ denotes
the supremum of the modulus of
the $i$th partial derivative
\[
\frac{\partial X}{\partial a_i}(a_1,\ldots,a_{i-1},a_i,a_{i+1}
,\ldots)
\]
of $X$
with respect to the variable $a_i\in[\alpha,\beta]$.
\end{lem}
Let $h\dvtx\Z^d\to\R$ be some function. We define its forward and
backward discrete gradients $\nabla$ and $\nabla^*$
as
\[
\nabla h(x):=\lleft[ \matrix{ h(x+\ee_1)-h(x)
\cr
\vdots
\cr
h(x+\ee_d)-h(x) } \rright],\qquad \nabla^* h(x):=\lleft[
\matrix{ h(x)-h(x-\ee_1)
\cr
\vdots
\cr
h(x)-h(x-\ee_d) }
\rright];
\]
the discrete backward divergence of some vector field $V\dvtx\Z^d\to\R^d$
is given by the ``formal'' scalar product between $\nabla^*$ and
$V$, that is,
\[
\nabla^*\cdot V(x) = \sum_{i=1}^d \bigl(
V_i(x+\ee_i)-V_i(x) \bigr).
\]
To avoid confusion, when a function $h\dvtx\Z^d\times\Z^d\to\R
,(x,z)\mapsto h(x,z)$ depends on two variables, we denote
by $\nabla_1h$ (resp., $\nabla_1^*h$) the forward (resp., backward)
discrete gradient with respect to the first variable ($x$ here) and by
$\nabla_2h$ (resp., $\nabla_2^*h$) the forward (resp., backward)
discrete gradient with respect to the second variable ($z$ here). We
further use the notation $\nabla_{k,i}h:=\nabla_k h \cdot\ee_i$ for
the forward discrete gradients in direction $\ee_i$ (and likewise for
the backward gradients), $i\in\{1,\ldots,d\}$.

We define discrete Green's functions as follows.
%
%
\begin{defi}[(Discrete Green's function)]\label{defGreen}
Let $d\geq2$. Let $\omega$ be an environment, $p_\omega\dvtx\Z^d\to\R
, x\mapsto\sum_{|z-x|=1}\omega_{x,z}$ and $A$ be the associated
diagonal matrix on $\Z^d$ defined
by $A(x)=\operatorname{diag}(\omega_{x,x+\ee_1},\ldots,\omega_{x,x+\ee_d})$.
For all $T>0$, the Green function $G_T(\cdot,\cdot;\omega)\dvtx\Z
^d\times\Z
^d\to\Z^d,(x,y)\mapsto G_T(x,y;\omega)$ associated with
the environment $\omega$
is defined for all $y\in\Z^d$ as the unique space square-integrable
solution to
%
%
\begin{eqnarray}
\label{eqdisc-Green}
&&
\int_{\Z^d}T^{-1}p_\omega(x)G_T(x,y;
\omega)v(x) \,\d x\nonumber\\[-8pt]\\[-8pt]
&&\qquad{} +\int_{\Z
^d}\nabla v(x)\cdot A(x)
\nabla_1 G_T(x,y;\omega) \,\d x=v(y)\nonumber
\end{eqnarray}
for all square-integrable functions $v\dvtx\Z^d\to\R$, where $\int_{\Z
^d} \d y$ denotes the sum over all $y \in\Z^d$.
\end{defi}

The existence and uniqueness of discrete Green's functions is a
consequence of Riesz's representation theorem.
In the rest of this article we use the shorthand notation $G_T(x,y)$
for $G_T(x,y;\omega)$.
Note that $G_T$ is stationary in the sense that $(x,y)\mapsto
G_T(x+z,y+z)$ has the same statistics as $(x,y)\mapsto G_T(x,y)$.
This will be used for the gradient of the Green function as follows:
for all $q>0$,
%
%
\begin{equation}
\label{eqstat-grad-Green} \bigl\langle\bigl|\nabla_2 G_T(x,y)\bigr|^q
\bigr\rangle= \bigl\langle\bigl|\nabla_1 G_T(x-y,0)\bigr|^q
\bigr\rangle.
\end{equation}
The next two lemmas give estimates on the Green function and its derivatives.
%
%
\begin{lem}[(\cite{Gloria-10}, Lemma 3.2)]\label{lemptwise-Green-decay-opt}
There exists $c>0$ depending only on $\alpha,\beta$ and $d$ such that
for every environment $\omega$ and for all $T>0$,
the Green function $G_T$ satisfies the pointwise estimates: for all
$x,y\in\Z^d$,
%
%
\begin{eqnarray}
\label{eqGTvsgT-opt-d>2}
\mbox{for }d>2\qquad G_T(x,y) &\lesssim&
\bigl(1+|x-y|\bigr)^{2-d}\exp\biggl( -c\frac{|x-y|}{\sqrt{T}} \biggr),
\\
\label{eqGTvsgT-opt-d=2}
\mbox{for }d=2\qquad G_T(x,y) &\lesssim& \ln
\biggl(\frac{\sqrt{T}}{1+|x-y|}\biggr) \exp\biggl( -c\frac{|x-y|}{\sqrt
{T}} \biggr).
\end{eqnarray}
\end{lem}
%
%
\begin{lem}[(\cite{Gloria-Otto-09}, Lemma 2.9)]\label{lemint-grad}
Let $\omega$ be an environment, $T>0$, and let $G_T$ be the associated
Green function. Then, for $d\geq2$, there exists ${p}>2$ depending
only on $\alpha,\beta$ and $d$ such that for all $T>0$, ${p}\geq
r\geq2$,
$k >0$ and $R\lesssim1$,
%
%
\begin{equation}\label{eqint-grad}
\int_{R\leq|z|\leq2R}\bigl|\nabla_1G_T(z,0)
\bigr|^r\,\d z
\lesssim R^d\bigl(R^{1-d}
\bigr)^r \min\bigl\{1,\sqrt{T}R^{-1}\bigr\}^{k}.
\end{equation}
\end{lem}
Note that this lemma shows that $\nabla_1 G_T(z,0)$ has the optimal
decay $(1+|z|)^{1-d}$ (i.e., the decay of the Green function of the
Laplace operator)
when integrated on dyadic annuli (plus the exponential, or
superalgebraic decay).

%
\begin{cor}[(\cite{Gloria-Otto-09}, Corollary 2.3)]\label{coro}
For every environment $\omega$ and for all $T>0$ and $x,y\in\Z^d$,
\[
\bigl|\nabla_1 G_T(x,y;\omega)\bigr|,\bigl|\nabla_2
G_T(x,y;\omega)\bigr| \lesssim1
\]
(the multiplicative constant depending only on $\alpha,\beta$ and $d)$.
\end{cor}
Note that the versions of these lemmas proved in~\cite{Gloria-Otto-09}
and~\cite{Gloria-10} cover the case
when the zero-order term is constant, namely, $T^{-1}$ in place of
$T^{-1}p_\omega(x)$.
The proofs adapt mutatis mutandis using the uniform bounds $0<2^d\alpha
\leq p_\omega\leq2^d\beta$.

The last lemma we shall need is the following double convolution estimate.
%

\begin{lem}[(\cite{Gloria-Mourrat-10}, Lemma 6)]\label{lemdb-conv}
Let $d>2$, $T\gg1$ and let $g_T\dvtx\Z^d\to\R^+$ be given by
\[
g_T(x) = \bigl(1+|x|\bigr)^{2-d}\exp\biggl( -c\frac{|x|}{\sqrt{T}}
\biggr)
\]
for some $c>0$.
Let $h_T\dvtx\Z^d\to\R^+$ be such that
\[
\int_{|x|\leq R}h_T(x)^2 \lesssim1
\]
and for all $R\gg1$ and all $j\in\N$,
\[
\int_{2^jR \leq|x|<2^{j+1} R}h_T(x)^2\,\d x \lesssim
\bigl(2^jR\bigr)^{d-2(d-1)}.
\]
Then,
%
%
\begin{eqnarray} \label{eqap-8}
&&
\int_{\Z^d}\int_{\Z^d}\int
_{\Z^d}g_T(w)g_T\bigl(w'
\bigr)h_T(z-w)h_T\bigl(z-w'\bigr)\,\d z\,\d w
\,\d w'
\nonumber\\[-8pt]\\[-8pt]
&&\qquad\lesssim1+ \cases{ T^{3-d/2}, &\quad if $5\ge d>2$,
\vspace*{1pt}\cr
\ln T, &\quad if $d=6$,
\vspace*{1pt}\cr
1, &\quad if $d>6$.}
\nonumber
\end{eqnarray}
\end{lem}

We are in position to prove Proposition~\ref{pspecexp}.\vadjust{\goodbreak}
\begin{pf*}{Proof of Proposition~\ref{pspecexp}}
Our starting point is the following inequality which holds for every
nonnegative measure $\kappa$:
%
%
\begin{equation}
\label{eqstep-0-0}
\int_0^{T^{-1}} \d\kappa(
\lambda) \lesssim T^{-4}\int_0^\infty
\frac{1}{(T^{-1}+\lambda)^4} \,\d\kappa(\lambda),
\end{equation}
which follows from the fact that for $\lambda\leq T^{-1}$, $\frac
{T^{-4}}{(T^{-1}+\lambda)^4}\gtrsim1$. The variable $T^{-1}$
for $T$ large plays the role of $\mu$ in (\ref{specexp}).
In what follows we make the standard identification between stationary
functions $(z,\omega)\mapsto f(z,\omega)$ of both the space variable
$z\in\Z^d$
and the environment $\omega$ and their translated versions at 0
$\omega\mapsto f(0,\theta_z\omega)$ depending on the environment only.
We define $\phi_T$ as the unique stationary solution to
%
%
\begin{equation}
\label{eqphiT} T^{-1}\phi_T(x)-\frac{1}{p_\omega(x)}\nabla^*
\cdot A(x)\nabla\phi_T = \frac{1}{p_\omega(x)}\nabla^*\cdot A(x)\xi,
\end{equation}
whose existence and uniqueness follow from the Riesz representation
theorem in $\mathbb{L}^2(\tilde{\mathbb{P}})$
using the identification between the stationary function $\phi_T$ and
its version defined on the environment only
(see a similar argument of~\cite{Kunnemann-83}).
In particular, with the notation $\mathfrak{d}=\frac{1}{p_\omega
(x)}\nabla^*\cdot A(x)\xi$,
\[
\phi_T = \bigl(T^{-1}+\mathcal{L}\bigr)^{-1}
\mathfrak{d},
\]
where $\mathcal{L}$ is the operator defined in (\ref{eqdef-L}), and
the spectral theorem ensures
that
\[
\tilde{\mathbb{E}}\bigl(\phi_T^2\bigr) = \tilde{
\mathbb{E}}\bigl(\mathfrak{d}\bigl(T^{-1}+\mathcal{L}
\bigr)^{-2}\mathfrak{d}\bigr) = \int_0^\infty
\frac
{1}{(T^{-1}+\lambda)^2}\,\d e_{\mathfrak{d}}(\lambda),
\]
where $e_{\mathfrak{d}}$ is the spectral measure of $\mathcal{L}$
projected on the drift $\mathfrak{d}$.
We also let $\psi_T$ be the unique stationary solution to
%
%
\begin{equation}
\label{eqpsiT} T^{-1}\psi_T(x)-\frac{1}{p_\omega(x)}\nabla^*
\cdot A(x)\nabla\psi_T(x) = \phi_T(x),
\end{equation}
whose existence and uniqueness also follows from the Riesz
representation theorem in the probability space as well.
This time,
\[
\psi_T = \bigl(T^{-1}+\mathcal{L}\bigr)^{-2}
\mathfrak{d},
\]
and the spectral theorem yields
\[
\tilde{\mathbb{E}}\bigl(\psi_T^2\bigr) = \tilde{
\mathbb{E}}\bigl(\mathfrak{d}\bigl(T^{-1}+\mathcal{L}
\bigr)^{-4}\mathfrak{d}\bigr) = \int_0^\infty
\frac
{1}{(T^{-1}+\lambda)^4}\,\d e_{\mathfrak{d}}(\lambda).
\]
From now on, we shall use the shorthand notation $ \langle u
\rangle:=\tilde
{\mathbb{E}}(u)$
and $\operatorname{var} [u ]= \langle(u- \langle
u \rangle)^2 \rangle$ for all $u\in\LL^2(\tilde
{\mathbb{P}})$.
In particular, the identity above turns into
%
%
\begin{equation}
\label{eqstep-0-1} \int_0^\infty
\frac{1}{(T^{-1}+\lambda)^4} \,\d e_{\mathfrak
{d}}(\lambda) = \operatorname{var} [
\psi_T ],
\end{equation}
since $ \langle\psi_T \rangle=\frac{1}{\mathbb
{E}[p]}\int\psi_T p \,\d
\mathbb{P}=\frac{T}{\mathbb{E}[p]}\int\phi_T p \,\d\mathbb{P} =0$
using equations (\ref{eqpsiT}) and (\ref{eqphiT}).

The streamline of the proof is to obtain bounds on the spectral
exponents via (\ref{eqstep-0-0}) and (\ref{eqstep-0-1}) by proving
bounds on the variance of $\psi_T$.

The rest of the proof, which is dedicated to the estimate of
$\operatorname{var} [\psi_T ]$, is divided into five steps.
As a starting point we appeal to the variance estimate of Lem\-ma~\ref
{lemvar-estim} that we apply to $\psi_T$.
This requires us to estimate the susceptibility of $\psi_T$ with
respect to the random coefficients.
In view of (\ref{eqpsiT}) it is not surprising that we will have to
estimate not only the susceptibility
of $\psi_T$ but also of $\phi_T$ and of some Green function with
respect to the random coefficients.
In the first step, we establish the susceptibility estimate for the
Green function. In step 2 we turn to the
susceptibility estimate for the approximate corrector $\phi_T$.
We then show in step 3 that, relying on~\cite{Gloria-Otto-09}, this implies
that the spectral exponents are at least
%
%
\begin{equation}
\label{eqstep-3} \cases{ (2,-q), &\quad for $d=2$,
\vspace*{1pt}\cr
(2,0), &\quad for $d>2$.}
\end{equation}
In step 4 we estimate the susceptibility of $\psi_T$. We conclude the
proof of the proposition in step 5.\vspace*{8pt}

\textit{Step} 1. Susceptibility of the Green function.

We shall prove for all $e=(z,z') \in\mathbb{B}$, $z\in\Z^d$,
$z'=z+\mathbf{e}_i$,
%
%
\begin{eqnarray}
\label{eqstep-1-2}
\frac{\partial G_T}{\partial\omega_e}(x,y) &=& -T^{-1}
\bigl(G_T(z,y)G_T(x,z)+G_T
\bigl(z',y\bigr)G_T\bigl(x,z'\bigr)
\bigr)
\nonumber\\[-8pt]\\[-8pt]
&&{}-\nabla_{2,i}G_T(x,z)\nabla_{1,i}G_T(z,y)
\nonumber
\end{eqnarray}
and
%
%
\begin{eqnarray}
\label{eqstep-1-3}
\sup_{\omega_e}\bigl|\nabla_{1,i}G_T(z,y)\bigr|
&\lesssim& \bigl|\nabla_{1,i}G_T(z,y)\bigr| +T^{-1}g_T(y-z),
\nonumber\\[-8pt]\\[-8pt]
\sup_{\omega_e}\bigl|\nabla_{2,i}G_T(y,z)\bigr|&\lesssim& \bigl|
\nabla_{2,i}G_T(y,z)\bigr| +T^{-1}g_T(y-z),
\nonumber
\end{eqnarray}
where $g_T\dvtx\Z^d\to\R^+$ satisfies for some constant $c>0$ (depending
on $\alpha,\beta,d$)
%
%
\begin{equation}
\label{eqgT-d>2} g_T(x) = \bigl(1+|x|\bigr)^{2-d}\exp\biggl( -c
\frac{|x|}{\sqrt{T}} \biggr)
\end{equation}
for $d>2$, and
%
%
\begin{equation}
\label{eqgT-d=2} g_T(x) = \biggl\llvert\ln\biggl(
\frac{\sqrt{T}}{1+|x|} \biggr)\biggr\rrvert\exp\biggl( -c\frac
{|x|}{\sqrt{T}} \biggr)
\end{equation}
for $d=2$.

We define the elliptic operator $L_T$ as
\[
(L_Tu) (x) = \sum_{x',|x-x'|=1}
\omega_{x,x}T^{-1}u(x)+\sum_{x',|x-x'|=1}
\omega_{x,x'} \bigl(u(x)-u\bigl(x'\bigr) \bigr),\vadjust{\goodbreak}
\]
so that for all $y\in\Z^d$, (\ref{eqdisc-Green}) takes the form
%
%
\begin{equation}
\label{eqstep-1-4} \bigl(L_T G_T(\cdot,y)\bigr) (x) =
\delta(x-y).
\end{equation}
Recalling that the edges are not oriented, a formal differentiation of
this equation with respect to $\omega_e=\omega_{z,z'}=\omega_{z',z}$ yields
\begin{eqnarray*}
&&
L_T \biggl(\frac{\partial G_T}{\partial\omega_e}(\cdot,y) \biggr)
(x)+T^{-1}G_T(x,y)
\bigl(\delta(x-z)+\delta\bigl(x-z'\bigr)\bigr)
\\
&&\quad{}+ \bigl(G_T(z,y)-G_T\bigl(z',y\bigr)
\bigr)\delta(x-z) + \bigl(G_T\bigl(z',y
\bigr)-G_T\bigl(z',y\bigr) \bigr)\delta
\bigl(x-z'\bigr) \\
&&\qquad= 0.
\end{eqnarray*}
Using (\ref{eqstep-1-4}), this identity turns into
\begin{eqnarray*}
&&
L_T \biggl( \frac{\partial G_T}{\partial\omega_e}(\cdot,y) +T^{-1}
\bigl(G_T(z,y)G_T(\cdot,z)+G_T
\bigl(z',y\bigr)G_T\bigl(\cdot,z'\bigr)
\bigr)
\\
&&\hspace*{153.5pt}{} +\nabla_{2,i}G_T(\cdot,z)\nabla_{1,i}G_T(z,y)
\biggr) (x) = 0.
\end{eqnarray*}
Provided that the argument of $L_T$ is well defined (i.e., $G_T$ is
differentiable w.r.t.~$\omega_e$) and that it
is square-integrable on $\Z^d$, it vanishes identically by the Riesz
representation theorem---which is the desired identity (\ref{eqstep-1-2}).

To turn this into a rigorous argument, one may first consider finite
differences of parameter $h>0$ instead of a derivative w.r.t. $\omega
_e$, use that $L_T$ is bijective on the set of square-integrable
functions on $\Z^d$ and then pass to the limit $h\to0$.
We refer the reader to~\cite{Gloria-Otto-09}, proof of Lemma 2.5, for
details, and directly turn to (\ref{eqstep-1-3}).

From (\ref{eqstep-1-2}) with $x=z$ and $x=z'$, we infer that
%
%
\begin{eqnarray}
\label{eqstep-1-2-mod}
&&
\frac{\partial\nabla_{1,i}G_T(z,y)}{\partial
\omega_e} \nonumber\\
&&\qquad= -\nabla_{1,i}
\nabla_{2,i}G_T(z,z) \nabla_{1,i}G_T(z,y)
\\
&&\qquad\quad{} -T^{-1} \bigl( G_T(z,y)\nabla_{1,i}G_T(z,z)+G_T
\bigl(z',y\bigr)\nabla_{1,i}G_T
\bigl(z,z'\bigr) \bigr).
\nonumber
\end{eqnarray}
Using the uniform pointwise estimate of Corollary~\ref{coro}
and the
uniform
estimate on the Green function of Lemma
\ref{lemptwise-Green-decay-opt},
we obtain (\ref{eqstep-1-3}) by considering (\ref{eqstep-1-2-mod})
as an ODE for $\nabla_{1,i}G_T(z,y)$ in function of $\omega_e$.\vspace*{8pt}

\textit{Step} 2. Susceptibility of $\phi_T$.

In this step we shall prove that for $e=(z,z')\in\mathbb
{B}$, $z\in\Z^d$ and $z'=z+\mathbf{e}_i$,
%
%
\begin{eqnarray}
\label{eqstep-2-1}
\frac{\partial\phi_T}{\partial\omega_e}(x) &=& -\bigl
(\nabla_i
\phi_T(z)+\xi_i\bigr)\nabla_{2,i}G_T(x,z)
\nonumber\\[-8pt]\\[-8pt]
&&{}-T^{-1}\phi_T(z) \bigl(G_T(x,z)+G_T
\bigl(x,z'\bigr) \bigr),
\nonumber
\\
\label{eqstep-2-2}
\sup_{\omega_e}\bigl|\phi_T(x)\bigr| &\lesssim&\bigl|
\phi_T(x)\bigr|
\nonumber\\
&&{}+\bigl(\bigl|\nabla_i \phi_T(z)\bigr|+1\bigr) \\
&&\hspace*{11pt}{}\times\bigl(\bigl|
\nabla_{2,i}G_T(x,z)\bigr|+T^{-1/2}g_T(x-z)
\bigr),
\nonumber
\\
\label{eqstep-2-3}\qquad
\sup_{\omega_e}\biggl\llvert\frac{\partial\phi
_T}{\partial\omega_e}(x)\biggr
\rrvert&\lesssim&\bigl(\bigl|\nabla_i \phi_T(z)\bigr|+1\bigr)
\bigl(\bigl|\nabla_{2,i}G_T(x,z)\bigr|+T^{-1/2}g_T(x-z)
\bigr)
\end{eqnarray}
and for all $n\in\N$,
%
%
\begin{eqnarray}
\label{eqstep-2-4}\quad
&&
\sup_{\omega_e}\biggl\llvert\frac{\partial(\phi
_T(x)^{n+1})}{\partial
\omega_e}\biggr
\rrvert\nonumber\\
&&\qquad\lesssim\bigl(\bigl|\nabla_i \phi_T(z)\bigr|+1\bigr)
\bigl(\bigl|\nabla_{2,i}G_T(x,z)\bigr|+T^{-1/2}g_T(x-z)
\bigr)
\\
&&\qquad\quad\hspace*{0pt}{}\times\bigl(\bigl|\phi_T(x)\bigr| +\bigl(\bigl|\nabla_i
\phi_T(z)\bigr|+1\bigr) \bigl(\bigl|\nabla_{1,i}G_T(z,x)\bigr|+T^{-1/2}g_T(x-z)
\bigr)\bigr)^n.
\nonumber
\end{eqnarray}
As for the Green function, we rewrite the defining equation for $\phi
_T$ as
%
%
\begin{equation}
\label{eqstep-2-5} (L_T \phi_T) (x)-\nabla^* \cdot
A(x)\xi= 0.
\end{equation}
Formally differentiating (\ref{eqstep-2-5}) w.r.t. $\omega_e$ yields
\begin{eqnarray*}
&&
L_T\,\frac{\partial\phi_T}{\partial\omega_e}(x)-\bigl(\nabla_i
\phi_T(x)+\xi_i\bigr) \bigl(\delta(x-z)-\delta
\bigl(x-z'\bigr) \bigr)
\\
&&\qquad\hspace*{0pt}{}+T^{-1}\phi_T(x) \bigl(\delta(x-z)+\delta
\bigl(x-z'\bigr) \bigr) = 0,
\end{eqnarray*}
which, using (\ref{eqstep-1-4}), turns into
\begin{eqnarray*}
&&
L_T \biggl( \frac{\partial\phi_T}{\partial\omega_e}-(\nabla_i
\phi_T+\xi_i) \bigl(G_T(
\cdot,z)-G_T\bigl(\cdot,z'\bigr) \bigr)
\\
&&\qquad\hspace*{42.5pt}{}+T^{-1}\phi_T \bigl(G_T(
\cdot,z)+G_T\bigl(\cdot,z'\bigr) \bigr) \biggr) (x) =
0.
\end{eqnarray*}
This (formally) shows (\ref{eqstep-2-1}).

To turn this into a rigorous argument, we may combine (\ref
{eqstep-1-2}) with the Green
representation formula
\[
\phi_T(x) = \int_{\Z^d}G_T(x,y)
\nabla^*\cdot A(y)\xi\,\d y,
\]
which holds since $G_T(x,\cdot)$ is integrable on $\Z^d$ by
Lemma~\ref{lemptwise-Green-decay-opt},
and use standard results of commutation of integration and
differentiation.

We now turn to (\ref{eqstep-2-3}).
This estimate follows from (\ref{eqstep-2-1}), (\ref{eqstep-1-3})
and the following two facts:
%
%
\begin{equation}
\label{eqstep-2-6}
|\phi_T| \lesssim\sqrt{T}
\end{equation}
and
%
%
\begin{equation}
\label{eqstep-2-7} \sup_{\omega_e}\bigl|\nabla_i
\phi_T(z)\bigr| \lesssim\bigl|\nabla_i\phi_T(z)\bigr|+1.
\end{equation}
The starting point to prove (\ref{eqstep-2-6}) is the Green
representation formula in the form of
%
%
\begin{eqnarray}\label{eqstep-2-6-mod}
\bigl|\phi_T(x)\bigr|&=&\biggl\llvert\int_{\Z^d}G_T(x,y)
\nabla^*\cdot A(y)\xi\,\d y\biggr\rrvert
\nonumber
\\
&=&\biggl\llvert\int_{\Z^d}\nabla_2
G_T(x,y)\cdot A(y)\xi\,\d y\biggr\rrvert
\\
&\lesssim& \int_{\Z^d}\bigl|\nabla_2G_T(0,y)\bigr|
\,\d y.\nonumber
\end{eqnarray}
The claim would easily follow if we had the estimate
\[
\bigl|\nabla_2G_T(0,y)\bigr| \lesssim\bigl(1+|y|\bigr)^{1-d}\exp
\biggl(-c\frac
{|y|}{\sqrt{T}} \biggr).
\]
Although this estimate does not hold \textit{pointwise}, it holds when
\textit{square-integrated on dyadic annuli}, as shows
Lemma~\ref{lemint-grad} with ``$p=2$ and $k$ large.'' The claim
(\ref{eqstep-2-6}) thus follows from a dyadic decomposition of space
in (\ref{eqstep-2-6-mod}) combined with Cauchy--Schwarz's inequality
and Lemma~\ref{lemint-grad} (a similar calculation is displayed, e.g.,
in~\cite{Gloria-10}, proof of Lemma 4).

For (\ref{eqstep-2-7}), we first note that (\ref{eqstep-2-1}) implies
\begin{eqnarray*}
\frac{\partial\nabla_i\phi_T(z)}{\partial\omega_e} &=&
-\bigl(\nabla_i\phi_T(z)+ \xi_i\bigr) \bigl( \nabla_{2,i}G_T
\bigl(z',z\bigr)-\nabla_{2,i}G_T(z,z) \bigr)
\\
&&{} +T^{-1}\phi_T(z) \bigl(\nabla_{1,i}G_T(z,z)+
\nabla_{1,i}G_T\bigl(z,z'\bigr) \bigr),
\end{eqnarray*}
which (seen as an ODE w.r.t. $\omega_e$) yields the claim using the
uniform bound $|\nabla_1 G_T|,|\nabla_2G_T|\lesssim1$ of
Corollary~\ref{coro}
and (\ref{eqstep-2-6}).

Estimate (\ref{eqstep-2-2}) is a direct consequence of (\ref
{eqstep-2-3}), whereas
(\ref{eqstep-2-4}) follows from the Leibniz's rule combined with
(\ref{eqstep-2-1}), (\ref{eqstep-2-2}) and (\ref{eqstep-2-3}).\vspace*{8pt}

\textit{Step} 3. Proof of (\ref{eqstep-3}).

The estimates (\ref{eqstep-3}) of the spectral exponents
follow from the more general estimates; for all $q>0$ there
exists $\gamma(q)>0$ such that
%
%
\begin{equation}
\label{eqstep-3-1} \bigl\langle|\phi_T|^q \bigr\rangle
\lesssim\cases{ \ln^{\gamma(q)}T, &\quad for $d=2$,
\cr
1, &\quad for $d>2$,}
\end{equation}
combined with the fact that
\[
\int_0^{T^{-1}} \d e_{\mathfrak{d}}(\lambda)
\lesssim T^{-2}\int_0^\infty
\frac{1}{(T^{-1}+\lambda)^2} \,\d e_{\mathfrak
{d}}(\lambda) = T^{-2} \bigl\langle
\phi_T^2 \bigr\rangle.
\]
The proof of (\ref{eqstep-3-1}) is an easy adaptation of \cite
{Gloria-Otto-09}, proof of Proposition 2.1, which already covers the
case of a constant
coefficient in the zero order term
of $L_T$, that is, for $T^{-1}\phi_T$ instead of $T^{-1}p_\omega\phi_T$
(no randomness in the zero order term).
We only point out what needs to be changed in~\cite{Gloria-Otto-09},
proof of Proposition 2.1.

The first step to apply the variance estimate of Lemma \ref
{lemvar-estim} is to show that $\phi_T$ is measurable
with respect to the cylindrical topology associated with the random
variables. This is proved exactly as in~\cite{Gloria-Otto-09}, Lemma 2.6.

The auxiliary~\cite{Gloria-Otto-09}, Lemmas 2.4 and 2.5, are replaced
by the susceptibility estimates (\ref{eqstep-1-3}), (\ref
{eqstep-2-2}), (\ref{eqstep-2-3})
and (\ref{eqstep-2-4}) of steps 1 and 2, which have, however, the
additional term $T^{-1/2}g_T(x-z)$ next to $|\nabla_{2,i}G_T(x,z)|$.

In the proof of~\cite{Gloria-Otto-09}, Proposition 2.1, the terms
$|\nabla_{2,i}G_T(x,z)|$
are either estimated by the Green function $G_T(x,z)$ itself [in which
case the additional term $T^{-1/2}g_T(x-z)$ is of higher order],
or they are controlled on dyadic annuli by Lemma~\ref{lemint-grad}.
By definition (\ref{eqgT-d>2}) for $d>2$ and (\ref{eqgT-d=2}) for
$d=2$ of the function $g_T$, it is easy to see that
for all $r\geq2$, $k>0$ and $R\gg1$: for $d>2$,
\[
\int_{R\leq|x-z|<2R} \bigl(T^{-1/2}g_T(x-z)
\bigr)^r \,\d z \lesssim R^d\bigl(R^{1-d}
\bigr)^r \min\bigl\{1,\sqrt{T}R^{-1}\bigr\}^k,
\]
whereas for $d=2$
\[
\int_{R\leq|x-z|<2R} \bigl(T^{-1/2}g_T(x-z)
\bigr)^r \,\d z \lesssim R^2 \bigl(R^{-1}
\bigr)^r \ln^qT \min\bigl\{1,\sqrt{T}R^{-1}\bigr
\}^k.
\]
These scalings coincide with those of Lemma~\ref{lemint-grad} (with a
possible additional logarithmic correction for $d=2$).

Hence, the proof of~\cite{Gloria-Otto-09}, Proposition 2.1, adapts
mutatis mutandis to the present case, and we have (\ref{eqstep-3-1}).\vspace*{8pt}

\textit{Step} 4. Susceptibility of $\psi_T$.

In this step we shall prove that for all $e=(z,z')$, $z\in
\Z^d$ and $z'=z+\mathbf{e}_i$ and
for all $x\in\Z^d$,
%
%
\begin{eqnarray}
\label{eqstep-4-1}
\frac{\partial\psi_T}{\partial\omega_e}(x)&=&
-\nabla_{2,i}G_T(x,z)
\nabla_i \psi_T(z)-T^{-1}G_T(x,z)
\psi_T(z)
\nonumber\\
&&{} -T^{-1}G_T\bigl(x,z'\bigr)
\psi_T\bigl(z'\bigr)
\nonumber
\\
&&{} -\bigl(\nabla_i\phi_T(z)+\xi_i\bigr)
\int_{\Z^d}G_T(x,y)p_\omega(y)
\nabla_{2,i}G_T(y,z) \,\d y
\\
&&{} -T^{-1}\phi_T(z)\int_{\Z^d}G_T(x,y)p_\omega(y)
\bigl( G_T(y,z)+G_T\bigl(y,z'\bigr)
\bigr) \,\d y
\nonumber
\\
&&{} +G_T(x,z)\phi_T(z)+G_T
\bigl(x,z'\bigr)\phi_T\bigl(z'\bigr)
\nonumber
\end{eqnarray}
and
%
%
\begin{eqnarray}
\label{eqstep-4-2}
&&
\sup_{\omega_e}\biggl\llvert\frac{\partial\psi
_T}{\partial\omega_e}(x)
\biggr\rrvert
\nonumber\\
\hspace*{5pt}&&\qquad\lesssim g_T(z-x) \bigl( \bigl|\nabla_i
\psi_T(z)\bigr|+T^{-1}\bigl|\psi_T(z)\bigr|+
\nu_d(T) \bigl(1+\bigl|\phi_T(z)\bigr|+\bigl|\phi_T
\bigl(z'\bigr)\bigr|\bigr) \bigr)
\nonumber\\[-8pt]\\[-8pt]
&&\qquad\quad{} +\bigl(1+\bigl|\phi_T(z)\bigr|+\bigl|\phi_T\bigl(z'
\bigr)\bigr|\bigr)\nonumber\\
&&\qquad\quad\hspace*{11pt}{}\times\int_{\Z^d}g_T(y-x) \bigl( \bigl|
\nabla_{2,i}G_T(y,z)\bigr|+T^{-1}g_T(y-z)
\bigr) \,\d y,
\nonumber
\end{eqnarray}
where
%
%
\begin{equation}
\nu_d(T) = \cases{ T, &\quad for $d=2$,
\vspace*{1pt}\cr
\sqrt{T}, &\quad for $d=3$,
\vspace*{1pt}\cr
\ln
T, &\quad for $d=4$,
\vspace*{1pt}\cr
1, &\quad for $d>4$.}
\end{equation}
The starting point is again the Green representation formula
\[
\psi_T(x) = \int_{\Z^d}G_T(x,y)p_\omega(y)
\phi_T(y) \,\d y,
\]
associated with (\ref{eqpsiT}) in the form
\[
T^{-1}p_\omega\psi_T-\nabla^*\cdot A\nabla
\psi_T = p_\omega\phi_T.
\]
Differentiated w.r.t. $\omega_e$ it turns into
\begin{eqnarray*}
\frac{\partial\psi_T(x)}{\partial\omega_e} &=& \int_{\Z^d}\frac
{\partial G_T(x,y) }{\partial\omega_e}
p_\omega(y)\phi_T(y) \,\d y
\\
&&{}+\int_{\Z^d}G_T(x,y)\,\frac{\partial p_\omega(y)}{\partial\omega_e}
\phi_T(y) \,\d y\\
&&{} +\int_{\Z^d}G_T(x,y)
p_\omega(y)\,\frac{\partial
\phi_T(y) }{\partial\omega_e} \,\d y.
\end{eqnarray*}
Combined with (\ref{eqstep-1-2}), (\ref{eqstep-2-1}) and the Green
representation formula itself, this shows (\ref{eqstep-4-1}).

We now turn to (\ref{eqstep-4-2}) and treat each term of the
right-hand side of (\ref{eqstep-4-1}) separately.
We begin with the supremum of the third line of (\ref{eqstep-4-1}),
and claim that
%
%
\begin{eqnarray}
\label{eqline3}
&&
\sup_{\omega_e} \biggl|\bigl(\nabla_i
\phi_T(z)+\xi_i\bigr) \int_{\Z
^d}G_T(x,y)p_\omega(y)
\nabla_{2,i}G_T(y,z) \,\d y \biggr|
\nonumber\\
&&\qquad\lesssim\bigl(1+\bigl|\phi_T(z)\bigr|+\bigl|\phi_T
\bigl(z'\bigr)\bigr|\bigr)\\
&&\qquad\quad{}\times\int_{\Z^d}g_T(y-x)
\bigl( \bigl|\nabla_{2,i}G_T(y,z)\bigr|+T^{-1}g_T(y-z)
\bigr) \,\d y,
\nonumber
\end{eqnarray}
which is proved:
\begin{itemize}
\item using (\ref{eqstep-2-7}) to bound the supremum in $\omega_e$
of $|\nabla_i \phi_T(z)|$ by $|\nabla_i \phi_T(z)|$ itself,
\item bounding $|\nabla_i\phi_T(z)|$ by the triangle inequality
$|\phi_T(z)|+|\phi_T(z')|$,
\item replacing the Green function $G_T$ by $g_T$ using Lemma \ref
{lemptwise-Green-decay-opt},
\item and appealing to (\ref{eqstep-1-3}) to estimate the supremum in
$\omega_e$ of $|\nabla_{2,i}G_T(y,x)|$.
\end{itemize}
This shows that this term is controlled by the second term of the
right-hand side of (\ref{eqstep-4-2}).

The supremum of the term in the fourth line of (\ref{eqstep-4-1})
is also estimated by the second term of the right-hand side of (\ref
{eqstep-4-2}), namely,
%
%
\begin{eqnarray}
\label{eqline4}
&&
\sup_{\omega_e} \biggl|T^{-1}\phi_T(z)\int
_{\Z^d}G_T(x,y)p_\omega(y) \bigl(
G_T(y,z)+G_T\bigl(y,z'\bigr) \bigr) \,\d y
\biggr|
\nonumber\\[-8pt]\\[-8pt]
&&\qquad\lesssim\bigl(1+\bigl|\phi_T(z)\bigr|+\bigl|\phi_T
\bigl(z'\bigr)\bigr|\bigr)T^{-1}\int_{\Z^d}g_T(y-x)g_T(y-z)
\,\d y.
\nonumber
\end{eqnarray}
It is\vspace*{1pt} enough to bound the Green function by $g_T$ using Lemma \ref
{lemptwise-Green-decay-opt},
and to apply (\ref{eqstep-2-2}) for $x=z$ to control $\sup_{\omega
_e}|\phi_T(z)|$,
and use that $|\nabla_1 G_T|,|\nabla_2G_T|,\break T^{-1/2}\*G_T\lesssim1$ by
Corollary~\ref{coro} and Lemma~\ref{lemptwise-Green-decay-opt}.

The suprema of the last two terms of (\ref{eqstep-4-1}) is bounded by
%
%
\begin{eqnarray}
\label{eqline5}
&&
\sup_{\omega_e}\bigl|G_T(x,z)\phi_T(z)+G_T
\bigl(x,z'\bigr)\phi_T\bigl(z'\bigr)\bigr|
\nonumber\\[-8pt]\\[-8pt]
&&\qquad
\lesssim\bigl(1+\bigl|\phi_T(z)\bigr|+\bigl|\phi_T
\bigl(z'\bigr)\bigr|\bigr)g_T(z-x)
\nonumber
\end{eqnarray}
and therefore controlled by the first term of the right-hand side of
(\ref{eqstep-4-2}).
The argument is similar to the proof of (\ref{eqline4}).

The subtle terms are the first three ones, for which we have to
estimate the suprema of $|\nabla_i \psi_T(z)|$, $|\psi_T(z)|$
and $|\psi_T(z')|$ w.r.t. $\omega_e$.

We begin with the following two estimates
%
%
\begin{eqnarray}
\label{equnif-psi-1}
\sup_{\omega_e} \bigl|\psi_T(z)\bigr|&\lesssim& \bigl|\psi_T(z)\bigr|+
\bigl(\bigl|\phi_T(z)\bigr|+\bigl|\phi_T\bigl(z'
\bigr)\bigr|+1 \bigr)\nu_d(T)
\nonumber\\[-8pt]\\[-8pt]
&&{} +\sup_{\omega_e} \bigl|\nabla_i\psi_T(z)\bigr|,
\nonumber
\\
\label{equnif-grad-psi-1}
\sup_{\omega_e} \bigl|\nabla_i\psi_T(z)\bigr|&\lesssim& \bigl|
\nabla_i\psi_T(z)\bigr|+ \bigl(\bigl|\phi_T(z)\bigr|+\bigl|
\phi_T\bigl(z'\bigr)\bigr|+1 \bigr)\nu_d(T)
\nonumber\\[-8pt]\\[-8pt]
&&{} +T^{-1}\sup_{\omega_e} \bigl|\psi_T(z)\bigr|,
\nonumber
\end{eqnarray}
which (seen as a linear system) show that there exists some $T_*>0$
such that for all $T \ge T^*$,
%
%
\begin{eqnarray}
\label{equnif-psi-2}
\sup_{\omega_e} \bigl|\psi_T(z)\bigr|&\lesssim& \bigl|\psi_T(z)\bigr|+
\bigl(\bigl|\phi_T(z)\bigr|+\bigl|\phi_T\bigl(z'
\bigr)\bigr|+1 \bigr)\nu_d(T)
\nonumber\\[-8pt]\\[-8pt]
&&{}+ \bigl|\nabla_i\psi_T(z)\bigr|,
\nonumber
\\
\label{equnif-grad-psi-2}
\sup_{\omega_e} \bigl|\nabla_i\psi_T(z)\bigr|&\lesssim& \bigl|
\nabla_i\psi_T(z)\bigr|+ \bigl(\bigl|\phi_T(z)\bigr|+\bigl|
\phi_T\bigl(z'\bigr)\bigr|+1 \bigr)\nu_d(T)
\nonumber\\[-8pt]\\[-8pt]
&&{}+T^{-1}\bigl|\psi_T(z)\bigr|.
\nonumber
\end{eqnarray}
To prove (\ref{equnif-psi-1}) we consider (\ref{eqstep-4-1}) as an
ODE on $\psi_T(z)$,
bound $\psi_T(z')$ by $\psi_T(z)+|\nabla_i \psi_T(z)|$ and use
(\ref{eqline3}), (\ref{eqline4}) and (\ref{eqline5}) (for $x=z$),
so that (\ref{eqstep-4-1}) turns into
\begin{eqnarray*}
\biggl\llvert\frac{\partial\psi_T}{\partial\omega_e}(z)\biggr\rrvert
&\lesssim&
\sup_{\omega_e}\bigl\{\bigl|\nabla_{2,i}G_T(z,z)\bigr|\bigl|
\nabla_i \psi_T(z)\bigr|\bigr\} +T^{-1}G_T(z,z)\bigl|
\psi_T(z)\bigr|
\\
&&{} +T^{-1}G_T\bigl(z,z'\bigr) \Bigl(\bigl|
\psi_T(z)\bigr|+\sup_{\omega_e}\bigl|\nabla_i
\psi_T(z)\bigr| \Bigr)
\\
&&{} +\bigl(1+\bigl|\phi_T(z)\bigr|+\bigl|\phi_T\bigl(z'
\bigr)\bigr|\bigr)
\\
&&{} \times\biggl(1+\int_{\Z^d}g_T(y-z) \bigl( \bigl|
\nabla_{2,i}G_T(y,z)\bigr|+T^{-1}g_T(y-z)
\bigr) \,\d y \biggr).
\end{eqnarray*}
Using Corollary~\ref{coro} and Lemma~\ref{lemptwise-Green-decay-opt}
in the form of $|\nabla_1 G_T|,|\nabla_2 G_T|,T^{-1}G_T\lesssim1$,
and bounding the gradient of the Green function by
$g_T$ in the integral, we obtain
\begin{eqnarray*}
\biggl\llvert\frac{\partial\psi_T}{\partial\omega_e}(z)\biggr\rrvert
&\lesssim&\sup_{\omega_e}
\bigl\{\bigl|\nabla_i \psi_T(z)\bigr|\bigr\} +\bigl|
\psi_T(z)\bigr| \\
&&{}+\bigl(1+\bigl|\phi_T(z)\bigr|+\bigl|\phi_T
\bigl(z'\bigr)\bigr|\bigr) \biggl(1+\int_{\Z
^d}g_T(y)^2
\,\d y \biggr).
\end{eqnarray*}
Noting that by definitions (\ref{eqgT-d>2}) and (\ref{eqgT-d=2}) of
$g_T$ we have $\int_{\Z^d}g_T(y)^2\,\d y \lesssim\nu_d(T)$,
this inequality turns into
\begin{eqnarray*}
\biggl\llvert\frac{\partial\psi_T}{\partial\omega_e}(z)\biggr\rrvert
&\lesssim&\sup_{\omega_e}
\bigl\{\bigl|\nabla_i \psi_T(z)\bigr|\bigr\} +\bigl|
\psi_T(z)\bigr|\\
&&{} +\nu_d(T) \bigl(1+\bigl|\phi_T(z)\bigr|+\bigl|
\phi_T\bigl(z'\bigr)\bigr|\bigr).
\end{eqnarray*}
Seen as an ODE for $\psi_T$, this implies (\ref{equnif-psi-1}).

We now turn to (\ref{equnif-grad-psi-1}) and infer from (\ref
{eqstep-4-1}) that
\begin{eqnarray*}
\frac{\partial\nabla_i \psi_T(z)}{\partial\omega_e}&=& -\nabla
_{1,i}\nabla_{2,i}G_T(z,z)
\nabla_i \psi_T(z)-T^{-1}\nabla_{1,i}G_T(z,z)
\psi_T(z)
\\
&&{} -T^{-1}\nabla_{1,i}G_T\bigl(z,z'
\bigr)\psi_T\bigl(z'\bigr)
\\
&&{} -\bigl(\nabla_i\phi_T(z)+\xi_i\bigr)
\int_{\Z^d}\nabla_{2,i}G_T(z,y)p_\omega(y)
\nabla_{2,i}G_T(y,z) \,\d y
\\
&&{} -T^{-1}\phi_T(z)\int_{\Z^d}
\nabla_{2,i}G_T(z,y)p_\omega(y) \bigl(
G_T(y,z)+G_T\bigl(y,z'\bigr) \bigr) \,\d y
\\
&&{} +\nabla_{1,i}G_T(z,z)\phi_T(z)+
\nabla_{1,i}G_T\bigl(z,z'\bigr)
\phi_T\bigl(z'\bigr).
\end{eqnarray*}
Repeating the string of arguments leading from (\ref{eqstep-4-1}) to
(\ref{equnif-psi-1}), we deduce (\ref{equnif-grad-psi-1})
and therefore (\ref{equnif-psi-2})
and (\ref{equnif-grad-psi-2}).
Combining the inequality $|\psi_T(z')|\leq|\psi_T(z)|+|\nabla_i
\psi_T(z)|$ with (\ref{equnif-psi-2})
and (\ref{equnif-grad-psi-2}) yields the last estimate we need:
%
%
\begin{equation}\label{equnif-psi-3}
\sup_{\omega_e} \bigl|\psi_T\bigl(z'\bigr)\bigr| \lesssim
\bigl|\psi_T(z)\bigr|+ \bigl(\bigl|\phi_T(z)\bigr|+\bigl|\phi_T
\bigl(z'\bigr)\bigr|+1 \bigr)\nu_d(T) + \bigl|
\nabla_i\psi_T(z)\bigr|.\hspace*{-25pt}
\end{equation}
We are finally in position to conclude the proof of (\ref{eqstep-4-2}).
The last four terms are controlled by (\ref{eqline3}), (\ref
{eqline4}) and (\ref{eqline5}).
Using (\ref{equnif-grad-psi-2}), (\ref{equnif-psi-2}) and (\ref
{equnif-psi-3}), and Corollary~\ref{coro} and Lemma \ref
{lemptwise-Green-decay-opt},
the first three terms of the right-hand side of (\ref{eqstep-4-1})
are controlled by the first term of the right-hand side of (\ref{eqstep-4-2}).
Estimate (\ref{eqstep-4-2}) is proved.\vspace*{8pt}

\textit{Step} 5. Estimate of $\operatorname{var} [\psi_T ]$ for $d>2$
and conclusion.

We apply the variance estimate of Lemma~\ref{lemvar-estim} to $\psi_T$
%
%
\begin{equation}
\label{eqstep-5-bis} \operatorname{var} [\psi_T ] \lesssim\sum
_{e\in
\mathbb{B}} \biggl\langle\sup_{\omega_e}\biggl\llvert
\frac{\partial
\psi_T(0)}{\partial\omega_e} \biggr\rrvert^2 \biggr\rangle
\end{equation}
and appeal to (\ref{eqstep-4-2}).
We distinguish two contributions in this sum and define
\begin{eqnarray*}
A_e&:=& g_T(z) \bigl( \bigl|\nabla_i
\psi_T(z)\bigr|+T^{-1}\bigl|\psi_T(z)\bigr|+
\nu_d(T) \bigl(1+\bigl|\phi_T(z)\bigr|+\bigl|\phi_T
\bigl(z'\bigr)\bigr|\bigr) \bigr),
\\
B_e&:=&\bigl(1+\bigl|\phi_T(z)\bigr|+\bigl|\phi_T
\bigl(z'\bigr)\bigr|\bigr)\int_{\Z^d}g_T(y)
\bigl( \bigl|\nabla_{2,i}G_T(y,z)\bigr|+T^{-1}g_T(y-z)
\bigr) \,\d y.
\end{eqnarray*}
The contribution associated with $A_e$ is estimated as follows:
\begin{eqnarray*}
\sum_{e\in\mathbb{B}} \bigl\langle
A_e^2 \bigr\rangle
&\lesssim& \sum_{e\in\mathbb{B}} \bigl\langle
g_T(z)^2 \bigl( \bigl|\nabla_i
\psi_T(z)\bigr|^2+T^{-2}\bigl|\psi_T(z)\bigr|^2\\
&&\hspace*{51pt}{}+
\nu_d(T)^2 \bigl(1+\bigl|\phi_T(z)\bigr|^2+\bigl|
\phi_T\bigl(z'\bigr)\bigr|^2\bigr) \bigr) \bigr
\rangle
\\
&\lesssim& \biggl(\sum_{z\in\Z^d}g_T(z)^2
\biggr) \bigl( \bigl\langle|\nabla\psi_T|^2 \bigr
\rangle+T^{-2} \bigl\langle\psi_T^2 \bigr
\rangle+\nu_d(T)^2 \bigl(1+ \bigl\langle
\phi_T^2 \bigr\rangle\bigr) \bigr)
\\
&\lesssim& \nu_d(T) \bigl( \bigl\langle|\nabla\psi_T|^2
\bigr\rangle+T^{-2} \bigl\langle\psi_T^2 \bigr
\rangle+\nu_d(T)^2 \bigl(1+ \bigl\langle
\phi_T^2 \bigr\rangle\bigr) \bigr)
\end{eqnarray*}
by stationarity of $\phi_T$, $\psi_T$ and $\nabla\psi_T$.
This is a nonlinear estimate since $ \langle\psi_T^2
\rangle$ and $ \langle|\nabla\psi_T|^2 \rangle$ appear
in the right-hand side whereas we want to estimate
$ \langle\psi_T^2 \rangle$. We then appeal to the
elementary a priori estimate
\[
\bigl\langle|\nabla\psi_T|^2 \bigr\rangle\lesssim
\bigl\langle\phi_T^2 \bigr\rangle^{1/2} \bigl
\langle\psi_T^2 \bigr\rangle^{1/2},
\]
which we obtain by testing (\ref{eqpsiT}) with the test solution
$\psi_T$, integrating by parts, using the bounds on $A$ and
Cauchy--Schwarz's inequality.
Using, in addition, Young's inequality, the estimate turns into
\[
\sum_{e\in\mathbb{B}} \bigl\langle A_e^2
\bigr\rangle-\frac
{1}{C} \bigl\langle\psi_T^2
\bigr\rangle\lesssim C \nu_d(T)^2 \bigl\langle
\phi_T^2 \bigr\rangle+\nu_d(T)^3
\bigl(1+ \bigl\langle\phi_T^2 \bigr\rangle\bigr)
\]
for all $C>0$ and $T$ large enough.\vadjust{\goodbreak}

Combined with (\ref{eqstep-3-1}) for $q=2$ and the definition of $\nu
_d(T)$, this turns into, for all $C>0$,
%
%
\begin{equation}
\label{eqstep-5-3} {\sum_{e\in\mathbb{B}} \bigl\langle
A_e^2 \bigr\rangle} -\frac
{1}{C} \bigl\langle
\psi_T^2 \bigr\rangle
\lesssim C \cases{
T^{3/2}, &\quad for $d=3$,
\cr
\ln^3 T, &\quad for $d=4$,
\cr
1, &\quad for
$d>4$.}
\end{equation}

We now turn to the term associated with $B_e$, which we split into two
terms $B_e=B_{e,1}+B_{e,2}$, where
\begin{eqnarray*}
B_{e,1}&=&\bigl(1+\bigl|\phi_T(z)\bigr|+\bigl|\phi_T
\bigl(z'\bigr)\bigr|\bigr)T^{-1}\int_{\Z
^d}g_T(y)g_T(y-z)
\,\d y,
\\
B_{e,2}&=&\bigl(1+\bigl|\phi_T(z)\bigr|+\bigl|\phi_T
\bigl(z'\bigr)\bigr|\bigr)\int_{\Z^d}g_T(y)
\bigl|\nabla_{2,i}G_T(y,z)\bigr| \,\d y.
\end{eqnarray*}
In particular, we shall prove that
%
%
\begin{equation}
\label{eqB2}
\sum_{e\in\mathbb{B}} \langle B_e
\rangle^2 \lesssim\sum_{e\in\mathbb
{B}} \bigl
\langle B_{e,1}^2 \bigr\rangle+\sum
_{e\in\mathbb
{B}} \bigl\langle B_{2,e}^2 \bigr
\rangle
\lesssim\cases{ T^{3/2}, &\quad for $d=3$,
\vspace*{1pt}\cr
T, &\quad for $d=4$,
\vspace*{1pt}\cr
\sqrt{T}, &\quad for $d=5$,
\vspace*{1pt}\cr
\ln T, &\quad for $d=6$,
\vspace*{1pt}\cr
1, &\quad for $d>6$.}
\end{equation}
We start with the sum of $B_{e,1}^2$ on $\mathbb{B}$.
Since $g_T$ is deterministic and $\phi_T$ is stationary,
\begin{eqnarray*}
\sum_{e\in\mathbb{B}} \bigl\langle B_{e,1}^2
\bigr\rangle&\lesssim&\bigl(1+ \bigl\langle\phi_T^2
\bigr\rangle^{1/2}\bigr) \\
&&{}\times\int_{\Z^d}\int
_{\Z
^d}\int_{\Z^d} T^{-1}g_T(y)g_T
\bigl(y'\bigr)g_T(y-z)g_T
\bigl(y'-z\bigr)\,\d y\,\d y'\,\d z.
\end{eqnarray*}
Using (\ref{eqstep-3-1}) with $q=2$ and the definitions (\ref
{eqgT-d=2}) and (\ref{eqgT-d>2}) of $g_T$ to estimate the integral, we
conclude that the first term of the left-hand side of (\ref{eqB2}) is
controlled by the right-hand side of (\ref{eqB2}). A formal argument to
estimate the triple integral is as follows. By the exponential decay of
$g_T$, it\vspace*{1pt} is enough to integrate on the set
$|y|,|y'|\lesssim\sqrt{T}$ and $|y-z|+|y'-z|\lesssim\sqrt{T}$, and the
integral\vspace*{2pt} essentially behaves as the integral on the ball
of radius $\sqrt{T}$ in $\Z^{3d}$ of $T^{-1}(1+|x|)^{4(2-d)}$, whence
the bounds
\[
\sum_{e\in\mathbb{B}} \bigl\langle
B_{e,1}^2 \bigr\rangle
\lesssim\cases{ T^{3/2},
&\quad for $d=3$,
\vspace*{1pt}\cr
T, &\quad for $d=4$,
\vspace*{1pt}\cr
\sqrt{T}, &\quad for $d=5$,
\vspace*{1pt}\cr
1, &\quad for $d=6$,
\vspace*{1pt}\cr
T^{-1/2}, &\quad for $d=7$,
\vspace*{1pt}\cr
T^{-1}\ln T, &\quad for $d=8$,
\vspace*{1pt}\cr
T^{-1}, &\quad for $d>8$.}
\]
To rigorously prove that the right-hand side of (\ref{eqB2}) is an
upper bound for $\sum_{e\in\mathbb{B}} \langle B_{e,1}^2
\rangle$, we may
simply note that
for $d>2$, if we define $h_T(z):=\sqrt{T}^{-1}\*g_T(z)$, then for all
$z\in\Z^d$,
\[
h_T(z) \lesssim\bigl(1+|z|\bigr)^{1-d}\exp\biggl(-c
\frac{|z|}{\sqrt{T}} \biggr),
\]
and $g_T$ and $h_T$ satisfy the assumptions of Lemma \ref
{lemdb-conv}, which yields the desired upper bound.

We turn to the sum of $B_{e,2}^2$ on $\mathbb{B}$:
\begin{eqnarray*}
{\sum_{e\in\mathbb{B}} \bigl\langle B_{e,2}^2
\bigr\rangle} &\lesssim& \Biggl\langle\sum_{i=1}^d
\int_{\Z^d}\bigl(1+\bigl|\phi_T(z)\bigr|+\bigl|
\phi_T(z+\ee_i)\bigr|\bigr)
\\
&&\hspace*{35pt}{} \times\int_{\Z^d} \int_{\Z^d}
g_T(y) g_T\bigl(y'\bigr)\bigl|
\nabla_{2,i}G_T(y,z)\bigr| \\
&&\hspace*{81.5pt}{}\times\bigl|\nabla_{2,i}G_T
\bigl(y',z\bigr)\bigr|\,\d y \,\d y' \,\d z \Biggr\rangle.
\end{eqnarray*}
Since $g_T$ is deterministic, one can take it out of the expectation.
We then choose $p>2$ such that the higher integrability result of
Lemma~\ref{lemint-grad} applies, and use H\"older's inequality
in probability with exponents $(p/(p-2),p,p)$. By stationarity
of $\phi_T$ and of $G_T$ [in the form of (\ref{eqstat-grad-Green})],
this estimate turns into
\begin{eqnarray*}
{\sum_{e\in\mathbb{B}} \bigl\langle B_{e,2}^2
\bigr\rangle} &\lesssim& \bigl\langle|\phi_T|^{p/(p-2)} \bigr
\rangle^{(p-2)/p} \\
&&{}\times\int_{\Z^d}\int_{\Z^d}
\int_{\Z^d}g_T(y) g_T
\bigl(y'\bigr)
\\
&&\hspace*{62pt}{} \times\bigl\langle\bigl|\nabla_{1}G_T(y-z,0)\bigr|^p
\bigr\rangle^{1/p}\\
&&\hspace*{62pt}{} \times\bigl\langle\bigl|\nabla_{1}G_T
\bigl(y'-z,0\bigr)\bigr|^p \bigr\rangle^{1/p} \,\d y\,\d
y'\,\d z.
\end{eqnarray*}
We then introduce the notation $h_T(x):= \langle|\nabla_{1}G_T(x,0)|^p
\rangle^{1/p}$.
By Lemma~\ref{lemint-grad} and Corollary~\ref{coro}, we have for all
$R\lesssim1$,
\[
\int_{|z|\leq R}h_T(x)^2\,\d x \lesssim1
\]
and for all $R\lesssim1$ and $j\in\N$,
\[
\int_{2^j R\leq|z|< 2^{j+1}R}h_T(x)^2 \,\d x \leq
\biggl(\int_{2^j
R\leq|z|< 2^{i+1}R}h_T(x)^p \,\d x
\biggr)^{2/p} \lesssim\bigl(2^jR\bigr)^{2-d}.
\]
Hence, $g_T$ and $h_T$ satisfy the assumptions of Lemma \ref
{lemdb-conv}, and $\sum_{e\in\mathbb{B}} \langle B_{e,2}^2
\rangle$
is bounded by the right-hand side of (\ref{eqB2}).\vadjust{\goodbreak}

We are in position to estimate $\operatorname{var} [\psi_T^2 ]$.
Choosing $C$ large enough in (\ref{eqstep-5-3}) to absorb the term
$\frac{1}{C}\operatorname{var} [\psi_T^2 ]$ in the
left-hand side of (\ref{eqstep-5-bis}), and using (\ref{eqB2}) we obtain
the estimate
%
%
\begin{equation}
\label{eqestim-psiT-d2}
\operatorname{var} \bigl[\psi_T^2
\bigr] \lesssim\cases{ T^{3/2}, &\quad for $d=3$,
\vspace*{1pt}\cr
T, &\quad for $d=4$,
\vspace*{1pt}\cr
\sqrt{T}, &\quad for $d=5$,
\vspace*{1pt}\cr
\ln T, &\quad for $d=6$,
\vspace*{1pt}\cr
1, &\quad for $d>6$.}
\end{equation}

We may conclude the proof.
Estimate (\ref{eqstep-3}) proved in step 3 yields the desired
spectral exponent for $d=2$, whereas the combination of
(\ref{eqestim-psiT-d2}) with (\ref{eqstep-0-0}) and (\ref
{eqstep-0-1}) yields the desired spectral exponents for $d>2$.
\end{pf*}
%
%
\begin{remark}\label{remopt-spex}
The structure of the proof can be summarized as follows:
\begin{longlist}[(a)]
\item[(a)] The starting point is the optimal estimates of $
\langle\phi_T^2 \rangle$ (up to logarithmic correction for $d=2)$.
\item[(b)] The variance estimate applied to $\psi_T$ and combined
with elliptic theory shows there exists a map $F$ such that
\[
\bigl\langle\psi_T^2 \bigr\rangle\leq F\bigl(T, \bigl
\langle|\nabla\psi_T|^2 \bigr\rangle, \bigl\langle
\psi_T^2 \bigr\rangle, \bigl\langle\phi_T^2
\bigr\rangle\bigr).
\]
\item[(c)] By an a priori estimate, $ \langle|\nabla
\psi_T|^2 \rangle\lesssim\langle\psi_T^2
\rangle^{1/2} \langle\phi_T^2 \rangle^{1/2}$.
\item[(d)] Combined with Young's inequality and \textup{(c)},
\textup{(b)} turns into
\[
\bigl\langle\psi_T^2 \bigr\rangle\leq\tilde F\bigl(T,
\bigl\langle\phi_T^2 \bigr\rangle\bigr)
\]
for some map $\tilde F$, and yields the claim.
\end{longlist}
In view of this, a possible strategy to prove optimal scalings
of the spectral exponents in any dimension would be to proceed by induction.
Set $\phi_{1,T}\equiv\phi_T$, and for all $k\geq1$ define $\phi_{k,T}$
as the unique weak solution
to
\[
T^{-1}\phi_{k+1,T}(x)-\frac{1}{p_\omega(x)}\nabla^*\cdot A(x)
\nabla\psi_{k+1,T}(x) = \psi_{k,T}(x),
\]
and apply the strategy described above to obtain optimal bounds on
$ \langle\phi_{k+1,T}^2 \rangle$
assuming optimal bounds on $ \langle\phi_{k,T}^2 \rangle
$ (which would
yield optimal spectral exponents up to dimension $4k-2$---with a
logarithmic correction in dimension $4k-2$).
The main difficulty is to work out a suitable map $F_k$ in step (b).
\end{remark}
%

%
%
\section{The random fluctuations}\label{sec4}
\label{secrandfluc}

In this section, we show that the computable quantity $\hat{A}_n(t)$
defined in (\ref{defhatA}) is a good approximation of $\sigma_t^2$
in the sense that its random fluctuations are small as soon as $n/t^2$
is large. We write $\N^*$ for $\N\setminus\{0\}$.
%
%
\begin{theorem}
\label{randfluc}
There exists $c > 0$ such that, for any $n \in\N^*$, $\eps> 0$ and
$t$ large enough,
\[
\P^\otimes_0 \bigl[ \bigl|\hat{A}_n(t) -
\sigma_t^2 \bigr| \ge\eps/t \bigr] \le\exp\biggl( -
\frac{n \eps^2}{c t^2} \biggr).
\]
\end{theorem}
Note that $\sigma_t^2$ is the mean value of $\hat{A}_n(t)$, and
moreover, $\hat{A}_n(t)$ consists of a sum of i.i.d. random
variables. We will thus obtain Theorem~\ref{randfluc} by using
classical techniques from large deviation theory. The important point
is that the i.i.d. random variables under consideration are uniformly
exponentially integrable. To see this, we use a sharp upper bound on
the transition probabilities of the random walk recalled in the
following theorem. We refer the reader to \cite
{Hebisch-Saloff-Coste-93} or~\cite{Woess-10}, Theorem~14.12, for a proof.
%
%
\begin{theorem}
\label{gauss}
There exists a constant $c_1 > 0$ such that, for any environment
$\omega$ with conductances in $[\alpha,\beta]$, any $t \in\N^*$
and $x \in\Z^d$,
\[
\PPo_0\bigl[Y(t) = x\bigr] \le\frac{c_1}{t^{d/2}} \exp\biggl(-
\frac
{|x|^2}{c_1 t} \biggr).
\]
\end{theorem}
From Theorem~\ref{gauss} we deduce the following result.
%
%
\begin{cor}
\label{supexp}
Let $c_1$ be given by Theorem~\ref{gauss}. For all $\lambda<
1/c_1$, one~has
\[
\sup_{t \in\N^*} \tilde{\E}_0 \biggl[\exp\biggl(
\lambda\frac
{|Y(t)|^2}{t} \biggr) \biggr] < + \infty.
\]
\end{cor}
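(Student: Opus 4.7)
The strategy is to reduce to the quenched expectation, exploit the quenched Gaussian upper bound from Theorem~\ref{gauss}, and then compare a discrete Gaussian sum to its continuous counterpart.

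First I would reduce to a quenched bound. Since the conductances lie in $[\alpha,\beta]$, the function $p(\omega)=\sum_{|z|=1}\omega_{0,z}$ satisfies $2d\alpha\le p(\omega)\le 2d\beta$, so the Radon--Nikodym derivative $\d\td{\P}/\d\P=p/\E[p]$ is bounded above by $\beta/\alpha$. Consequently
\[
\td{\E}_0\!\left[\exp\!\left(\lambda\frac{|Y(t)|^2}{t}\right)\right]
\;\le\;\frac{\beta}{\alpha}\,\sup_{\omega}\,\EEo_0\!\left[\exp\!\left(\lambda\frac{|Y(t)|^2}{t}\right)\right],
\]
where the supremum is taken over environments with conductances in $[\alpha,\beta]$, so it suffices to control the quenched expectation uniformly in $\omega$.

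Next, for any such $\omega$, I would expand the quenched expectation as a sum and plug in the Gaussian upper bound of Theorem~\ref{gauss}:
\[
\EEo_0\!\left[\exp\!\left(\lambda\frac{|Y(t)|^2}{t}\right)\right]
=\sum_{x\in\Z^d}\exp\!\left(\lambda\frac{|x|^2}{t}\right)\PPo_0[Y(t)=x]
\;\le\;\frac{c_1}{t^{d/2}}\sum_{x\in\Z^d}\exp\!\left(-\frac{\mu|x|^2}{t}\right),
\]
where $\mu:=c_1^{-1}-\lambda>0$ by assumption. This is the key place where the hypothesis $\lambda<1/c_1$ enters; it guarantees a net Gaussian decay in $x$.

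Finally, I would bound the remaining sum by a product of one-dimensional sums. Using that $k\mapsto e^{-\mu k^2/t}$ is unimodal and comparing with an integral gives, for each coordinate,
\[
\sum_{k\in\Z}e^{-\mu k^2/t}\;\le\;1+2\int_0^\infty e^{-\mu s^2/t}\,\d s\;=\;1+\sqrt{\pi t/\mu},
\]
so that $\sum_{x\in\Z^d}e^{-\mu|x|^2/t}\le(1+\sqrt{\pi t/\mu})^d$. Multiplying by $c_1/t^{d/2}$ yields a quantity bounded uniformly for $t\ge 1$ by a constant depending only on $c_1$, $\mu$ and $d$. Combined with the first step, this gives the desired uniform bound on $\td{\E}_0[\exp(\lambda|Y(t)|^2/t)]$.

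There is really no hard step here: the only subtle point is checking that the bound from Theorem~\ref{gauss} is genuinely uniform over the environments (which is clear from its statement) and that one chooses $\lambda<1/c_1$ strictly so that a positive $\mu$ is available to absorb the Gaussian tails.
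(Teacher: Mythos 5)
Your proposal is correct and follows essentially the same route as the paper: apply Theorem~\ref{gauss} to the quenched expectation and then compare the Gaussian sum with a Gaussian integral (the paper does this by decomposing $\Z^d$ into orthants, you by factoring into one-dimensional sums, which is equivalent). One small simplification: since $\td{\P}$ is already a probability measure, you do not need the $\beta/\alpha$ bound on the Radon--Nikodym derivative; the cruder inequality $\td{\E}_0[\,\cdot\,]\le\sup_\omega\EEo_0[\,\cdot\,]$ already suffices.
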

\begin{pf}
Let $\delta= 1/c_1 - \lambda$. By Theorem~\ref{gauss},
\[
\EEo_0 \bigl[ e^{ \lambda|Y(t)|^2/t} \bigr] \le c_1
{t^{-d/2}} \sum_{x \in\Z^d} e^{-\delta|x|^2/t}.
\]
If the sum ranges over all $x \in(\N^*)^d$, it is easy to bound it by
a convergent integral,
\[
{t^{-d/2}} \sum_{x \in(\N^*)^d} e^{-\delta|x|^2/t}
\le{t^{-d/2}} \int_{\R_+^d} e^{-\delta|x|^2/t} \,\d x =
\int_{\R_+^d} e^{-\delta
|x|^2} \,\d x.
\]
By symmetry, the estimate carries over to the sum over all $x
\in(\Z^*)^d$. The same argument applies for the sum over all
$x=(x_1,\ldots,x_d)$ having exactly one component equal to $0$, and so
on.
\end{pf}
The following lemma shows that the log-Laplace transform of $\frac
{(\xi\cdot Y(t))^2}{t} - \sigma_t^2$ is bounded by a parabola in a
neighborhood of $0$, uniformly over $t$.
%
%
\begin{lem}
\label{grandesdev}
There exist $\lambda_1 > 0$ and $c_2$ such that, for any $\lambda<
\lambda_1$ and any $t \in\N^*$,
\[
\ln\tilde{\E}_0 \biggl[ \exp\biggl( \lambda\biggl(
\frac{(\xi\cdot
Y(t))^2}{t} - \sigma_t^2 \biggr) \biggr) \biggr]
\le c_2 \lambda^2.
\]
\end{lem}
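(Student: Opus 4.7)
The plan is to reduce the lemma to the classical Bernstein-type observation that a centered random variable with uniformly bounded exponential moments has a quadratic log-Laplace transform near the origin. Set $Z_t := (\xi\cdot Y(t))^2/t$ and $X_t := Z_t - \sigma_t^2$. By the very definition of $\sigma_t^2$ in \eqref{defsigmat}, one has $\td{\E}_0[X_t] = 0$, and Theorem~\ref{kv} gives $\sigma_t^2 \to \sigma^2$, so $M := \sup_{t \in \N^*} \sigma_t^2 < + \infty$. In particular $|X_t| \le Z_t + M$ and $X_t^2 \le 2 Z_t^2 + 2 M^2$.

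The main tool is the elementary Taylor bound, obtained from the integral form of the remainder,
\[
e^{y} \le 1 + y + \frac{y^2}{2} e^{|y|} \qquad (y \in \R),
\]
applied with $y = \lambda X_t$. After taking the $\td{\E}_0$-expectation and using $\td{\E}_0[X_t] = 0$, one obtains
\[
\td{\E}_0\!\left[e^{\lambda X_t}\right] \le 1 + \frac{\lambda^2}{2}\, \td{\E}_0\!\left[X_t^2 \, e^{|\lambda X_t|}\right].
\]
The whole task is thus to show that $\td{\E}_0[X_t^2 e^{|\lambda X_t|}]$ is bounded uniformly in $t \in \N^*$ for $|\lambda|$ small enough.

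To control this second-moment-like term, I would first combine the two bounds above to get
\[
X_t^2 e^{|\lambda X_t|} \;\le\; 2 e^{|\lambda| M}\,(Z_t^2 + M^2)\, e^{|\lambda| Z_t},
\]
and then absorb the polynomial factor $Z_t^2$ into a slightly enlarged exponential via the elementary inequality $x^2 \le (4/\epsilon^2) e^{\epsilon x}$ valid for all $x \ge 0$ and all $\epsilon > 0$. This yields
\[
\td{\E}_0\!\left[X_t^2 e^{|\lambda X_t|}\right] \;\le\; C_\epsilon\, \td{\E}_0\!\left[e^{(|\lambda|+\epsilon) Z_t}\right]
\]
for some constant $C_\epsilon$ depending on $\epsilon$, $M$ and $\lambda_1$. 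Since $|\xi| = 1$ gives $Z_t \le |Y(t)|^2/t$, Corollary~\ref{supexp} furnishes a uniform-in-$t$ bound on the right-hand side as soon as $|\lambda| + \epsilon < 1/c_1$. Fixing for instance $\lambda_1 = \epsilon = 1/(4 c_1)$, we obtain $\td{\E}_0[X_t^2 e^{|\lambda X_t|}] \le C$ uniformly in $t$ for all $|\lambda| \le \lambda_1$, hence
\[
\td{\E}_0\!\left[e^{\lambda X_t}\right] \le 1 + \tfrac{C}{2} \lambda^2 \le \exp\!\left(\tfrac{C}{2} \lambda^2\right),
\]
and taking the logarithm with $c_2 := C/2$ concludes the proof.

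There is no substantive obstacle: the argument is the standard quadratic expansion of the cumulant-generating function combined with uniform exponential moments, and the only point requiring mild care is ensuring that one has room in Corollary~\ref{supexp} to absorb the polynomial prefactor $Z_t^2$ into a larger exponential, which is why one chooses $\lambda_1$ strictly smaller than $1/c_1$ rather than equal to it.
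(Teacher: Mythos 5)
Your proposal is correct and follows essentially the same strategy as the paper: both proofs exploit the centering $\td{\E}_0[X_t]=0$, reduce the claim to a quadratic Taylor estimate of the Laplace transform, and invoke Corollary~\ref{supexp} to obtain uniform-in-$t$ control of the higher moments. The only cosmetic difference is that the paper works with the full term-by-term series expansion of the exponential and the moment bound $\td{\E}_0|Z_t-\sigma_t^2|^k\le 2\,\td{\E}_0[Z_t^k]$, whereas you collapse this into a single Lagrange-remainder inequality $e^y\le 1+y+\tfrac{y^2}{2}e^{|y|}$ and absorb the polynomial prefactor $Z_t^2$ into a slightly enlarged exponential; both routes land on Corollary~\ref{supexp} in the same way.
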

\begin{pf}
It is sufficient to prove that there exists $c_3$ such that, for any
$\lambda$ small enough and any $t$,
\[
\tilde{\E}_0 \biggl[ \exp\biggl( \lambda\biggl(\frac{(\xi\cdot
Y(t))^2}{t} -
\sigma_t^2 \biggr) \biggr) \biggr] \le1+c_3
\lambda^2.
\]
We use the series expansion of the exponential to rewrite this
expectation as
\[
\sum_{k = 0}^{+\infty} \frac{\lambda^k}{k!} \tilde{
\E}_0 \biggl[ \biggl(\frac{(\xi\cdot Y(t))^2}{t} - \sigma_t^2
\biggr)^k \biggr].
\]
The term corresponding to $k=0$ is equal to $1$, whereas the term for
\mbox{$k = 1$} vanishes.
The remaining sum, for $k$ ranging from $2$ to infinity, can be
controlled using Corollary~\ref{supexp} combined with the bound
\[
\tilde{\E}_0\biggl\llvert\frac{(\xi\cdot Y(t))^2}{t} - \sigma_t^2
\biggr\rrvert^k \le2 \tilde{\E}_0 \biggl[
\frac{(\xi\cdot Y(t))^{2k}}{t^k} \biggr],
\]
which follows from the definition of $\sigma_t^2$ and Jensen's inequality.
\end{pf}
We are now in position to prove Theorem~\ref{randfluc}.
\begin{pf*}{Proof of Theorem~\ref{randfluc}}
From the definition of $\tilde{\P}$ given in (\ref{deftdP}), we can write
\[
\P^\otimes_0 \bigl[\hat{A}_n(t) -
\sigma_t^2 \ge\eps/ t \bigr] = \tilde{\P}^\otimes_0
\biggl[\frac{(\xi\cdot Y^{(1)}(t))^2 + \cdots+
(\xi\cdot Y^{(n)}(t))^2}{n t} - \sigma_t^2 \ge\eps/ t
\biggr].
\]
Let $\lambda> 0$. We bound the latter probability using Chebyshev's inequality,
%
%
\begin{eqnarray}
\label{comput1}
&&\P^\otimes_0 \bigl[\hat{A}_n(t)
- \sigma_t^2 \ge\eps/ t \bigr]
\nonumber
\\
&&\qquad \le\tilde{\E}^\otimes_0 \biggl[ \exp\biggl( \lambda\biggl(
\frac{(\xi
\cdot Y^{(1)}(t))^2 + \cdots+ (\xi\cdot Y^{(n)}(t))^2}{t} - n \sigma
_t^2 \biggr) \biggr) \biggr]\nonumber\\[-8pt]\\[-8pt]
&&\qquad\quad\hspace*{0pt}{}\times
\exp\biggl( -\frac{n \lambda\eps
}{t} \biggr)
\nonumber\\
&&\qquad \le{\tilde{\E}_0 \biggl[ \exp\biggl( \lambda\biggl(
\frac
{(\xi\cdot Y(t))^2}{t} - \sigma_t^2 \biggr) \biggr)
\biggr]}^n \exp\biggl( -\frac{n \lambda\eps}{t} \biggr).\nonumber
\end{eqnarray}
By Lemma~\ref{grandesdev}, the right-hand side of (\ref{comput1}) is
bounded by
\[
\exp\biggl( n \biggl( c_2 \lambda^2 -
\frac{ \lambda\eps}{t} \biggr) \biggr)
\]
for all $\lambda$ small enough. Choosing $\lambda= \eps/2 c_2 t$
(which is small enough for $t$ large enough), we obtain
%
%
\begin{equation}
\label{estim1} \P^\otimes_0 \bigl[\hat{A}_n(t)
- \sigma_t^2 \ge\eps/ t \bigr] \le\exp\biggl( -
\frac{n \eps^2}{4 c_2 t^2} \biggr).
\end{equation}
The probability of the symmetric event
\[
\P^\otimes_0 \bigl[\sigma_t^2 -
\hat{A}_n(t) \ge2 \eps/ t \bigr]
\]
can be handled the same way, so the proof is complete.
\end{pf*}
%
%
\section{Central limit theorem}\label{sec5}
\label{secclt}

In this short section, we complete the analysis by showing that the
quantity $\sqrt{n(t)} ( \hat{A}_{n(t)}(t) - \sigma_t^2
) $ satisfies a
central limit theorem.
%
%
\begin{prop}
\label{pclt}
Let $(n(t))_{t \in\N^*}$ be any sequence tending to infinity with
$t$. Under the measure $\P^\otimes_0$ and as $t$ tends to infinity,
the random variable
\[
\sqrt{n(t)} \bigl( \hat{A}_{n(t)}(t) - \sigma_t^2
\bigr)
\]
converges in distribution to a Gaussian random variable of variance
\[
\mathsf{v} = \biggl( 3 \frac{\E[p^2]}{\E[p]^2} - 1 \biggr) \sigma^4.
\]
\end{prop}
\begin{pf}
Let us define
\[
\V(t) = \frac{p(\omega)(\xi\cdot Y_t)^2}{t \E[p]} - {\sigma_t^2}
\]
and
\[
\V^{(k)}(t) = \frac{p(\omega^{(k)})(\xi\cdot Y_t^{(k)})^2}{t \E
[p]} - {\sigma_t^2},
\]
so that
\[
\hat{A}_{n(t)}(t) - \sigma_t^2 =
\frac{1}{n(t)}\sum_{k = 1}^{n(t)}
\V^{(k)}(t).
\]
Let also\vspace*{1pt} $\mathsf{v}_t = \E_0[\V(t)^2]$. Note that for any $t$, $(\V
^{(k)}(t))_{k \in\N}$ are i.i.d.
centered random variables under $\P_0^\otimes$. From the
Lindeberg--Feller theorem (see, e.g.,~\cite{durr}, Theorem~2.4.5), we know that in order to show
\[
\frac{\sum_{k = 1}^{n(t)} \V^{(k)}(t)}{\sqrt{n(t)}} \,\xxrightarrow{t \to+
\infty}^{(\mathrm{distr.})}\,
\operatorname{Gaussian}(0,\mathsf{v}),
\]
it suffices to check that
%
%
\begin{equation}
\label{cond1}
\mathsf{v}_t \xrightarrow{t \to+ \infty} \mathsf{v}
\end{equation}
and that for any $\eps> 0$,
%
%
\begin{equation}
\label{cond2} \E_0 \bigl[ \V(t)^2
\one_{\{ \V(t) \ge\eps\sqrt{n(t)} \}} \bigr] \xrightarrow{t \to+ \infty
} 0.
\end{equation}
We learn from~\cite{sid} that for almost every environment and as $t$
tends to infinity, $\xi\cdot Y_t/\sqrt{t}$ converges in distribution
under $\PPo_0$ to a Gaussian random variable of variance $\sigma^2$,
that we write $\sigma\mathsf{G}$, where $\mathsf{G}$ is a standard
Gaussian random variable. In order to justify that for almost every
environment, $\xi\cdot Y_t/\sqrt{t}$ converges in distribution to
$(\sigma\mathsf{G})^2$, we need some uniform integrability property,
since the square function is unbounded. But this uniform integrability
is a direct consequence of Theorem~\ref{gauss}. Hence, under $\P_0$
and as $t$ tends to infinity, the random variable
%
%
\begin{equation}
\label{rand3}
\frac{p(\omega)(\xi\cdot Y_t)^2}{t \E[p]}
\end{equation}
converges in distribution to
\[
\frac{p(\omega)}{\E[p]} (\sigma\mathsf{G})^2,
\]
where $\om$ follows the distribution $\P$, and is independent of
$\mathsf{G}$. For the foregoing reason, the squares of the random
variables in (\ref{rand3}) are uniformly integrable as $t$ varies.
Since we know, moreover, that $\lim_{t \to+ \infty} \sigma_t^2 =
\sigma^2$, we thus obtain
\[
\lim_{t \to+ \infty} \mathsf{v}_t = \E\biggl[ \biggl(
\frac
{p(\omega)}{\E[p]} (\sigma\mathsf{G})^2 - \sigma^2
\biggr)^2 \biggr].
\]
We obtain (\ref{cond1}) by expanding this expectation, recalling that
the fourth moment of $\mathsf{G}$ is equal to $3$.

Similarly, Theorem~\ref{gauss} gives us sufficient control to
guarantee that (\ref{cond2}) holds, so the proof is complete.
\end{pf}

%
\section{Numerical validation and comments}\label{sec6}
\label{secnumtest}

In this section, we illustrate on a simple two-dimensional example the
sharpness of the estimates of the systematic error and of the random
fluctuations
obtained in Theorems~\ref{syserr} and~\ref{randfluc}.

In the numerical tests, each conductivity of $\mathbb{B}$ takes the
value $\alpha=1$ or \mbox{$\beta=4$} with probability $1/2$. In this simple
case, the homogenized matrix is given by Dykhne's formula, namely,
$\Ah=\sqrt{\alpha\beta}\mathrm{Id}=2\mathrm{Id}$ (see, e.g.,
\cite{Gloria-10}, Appendix A). For the simulation of the random walk,
we generate (and store) the environment along the trajectory of the
walk. In particular, this requires us to store up to a constant times
$t$ data. In terms of computational cost, the expensive part of the
computations is the generation of the randomness. In particular, to
compute one realization of $\hat A_{t^2}(t)$ costs approximately the
generation of $t^2\times4t=4t^3$ random variables. A~natural advantage
of the method is its full scalability; the $t^2$ random walks used to
calculate a realization of $\hat A_{t^2}(t)$ are completely
independent.

We first test the estimate of the systematic error: up to a logarithmic
correction, the convergence is proved to be linear in time.
In view of Theorem~\ref{randfluc}, typical fluctuations of $t(\hat
A_{n(t)}(t) - \sigma_t^2)$ are of order no greater than $t/\sqrt
{n(t)}$, and thus become negligible when compared with the systematic
error as soon as the number $n(t)$ of realizations satisfies $n(t) \gg t^2$.
We display in Table~\ref{tabsyst} an\vspace*{1pt} estimate of the systematic error
$t\mapsto|\Ah-\frac{\mathbb{E}[p]}{2}\hat A_{n(t)}(t)|$ obtained
with $n(t)=K(t)t^2$ realizations.
The systematic error is plotted on Figure~\ref{figsyst} in function
of the time in logarithmic scale (crosses).
It matches quite well the function $f\dvtx t\mapsto C t^{-1}\ln t$ [for
$C>0$ chosen so that $f(1280)=|\Ah-\frac{\mathbb{E}[p]}{2}\hat
A_{n(640)}(1280)|$] which is plotted as a solid line.
This is consistent with Theorem~\ref{syserr} and supports the fact
that the spectral exponents are $(2,0)$ for $d=2$ [and not $(2,-q)$ for
some $q>0$].

%
%
\begin{table}[t]
\tabcolsep=0pt
\caption{Systematic error $|\Ah-\frac{\mathbb{E}[p]}{2}\hat
A_{n(t)}(t)|$ in function of the final time $t$ for $K(t)t^2$
realizations}\label{tabsyst}
\begin{tabular*}{\tablewidth}{@{\extracolsep{\fill}}lcccccccc@{}}
\hline
$t$ & 10 & 20 & 40 & 80 & 160 &320 &640 &1280\\
[4pt]
$K(t)$ & $10^4$ & $10^4$ & $10^4$ & $10^4$ & $10^4$ & $10^4$ & $4.0
10^3$ & $10^3$
\\[4pt]
Systematic  & 9.27E--02 & 5.31E--02 & 3.09E--02 & 1.71E--02 &
9.58E--03 & 5.45E--03 & 2.93E--03 & 1.66E--03\\
\quad error\\
\hline
\end{tabular*}
\end{table}

%
%
\begin{figure}[b]

\includegraphics{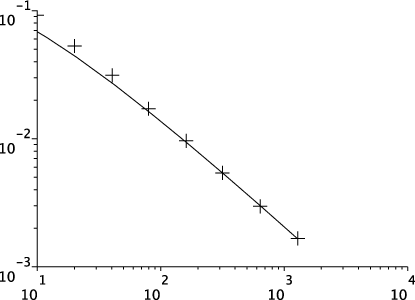}

\caption{Systematic error $|\Ah-\frac{\mathbb{E}[p]}{2}\hat
A_{n(t)}(t)|$ in function of the final time $t$ for $n(t)=K(t)t^2$
realizations (logarithmic scale).}
\label{figsyst}
\end{figure}

We now turn to the random fluctuations of $\hat A_{n(t)}(t)$.
Theorem~\ref{randfluc} gives us a Gaussian upper bound on the tail of
the fluctuations of $t(\hat A_{n}(t)-\sigma_t^2)$, measured in units
of $t/\sqrt{n}$,
whereas Proposition~\ref{pclt} proves the corresponding central limit
theorem, that is, convergence in distribution of $t(\hat A_{t^2}(t) -
\sigma_t^2)$ to a Gaussian random variable.
The Figures~\ref{fighisto-1}--\ref{fighisto-6} display the
histograms of $t \frac{\E[p]}{2} (\hat A_{t^2}(t) - \sigma_t^2)$
for $t=10,20,40$ and $80$ [with 10,000 realizations of $\hat A_{t^2}(t)$
in each case, and $\sigma_t^2$ approximated by
the empirical mean of $\hat A_{t^2}(t)$ over the 10,000 realizations].
As expected, they look Gaussian.
In addition, Proposition~\ref{pclt} also gives the limiting variance.
Table~\ref{tabvar} displays the limiting variance $(\E[p]/2)^2
\mathsf{v}=9.08$ and the empirical variances for $t=10,20,40,80,160$
and $320$,
which are in good agreement.

%
%
\begin{figure}

\includegraphics{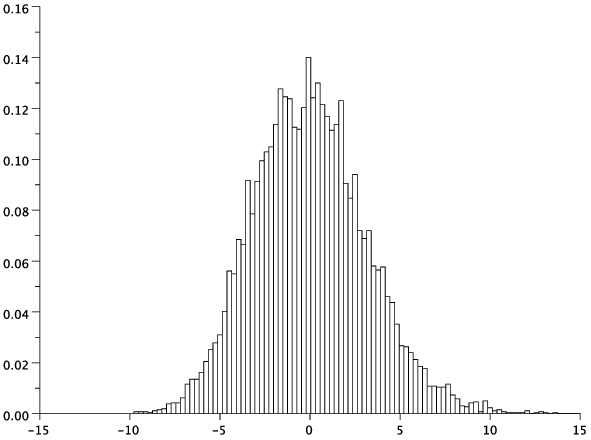}

\caption{Histogram of the re-scaled fluctuations for $t=10$.}
\label{fighisto-1}
\end{figure}

%
\begin{figure}

\includegraphics{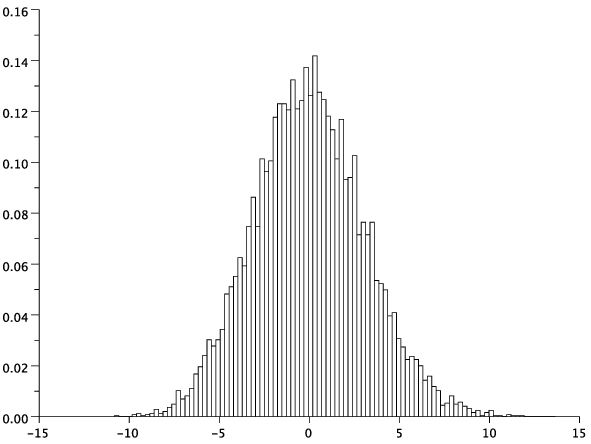}\vspace*{-3pt}

\caption{Histogram of the re-scaled fluctuations for $t=20$.}
\label{fighisto-2}
\end{figure}

%
\begin{figure}[b]

\includegraphics{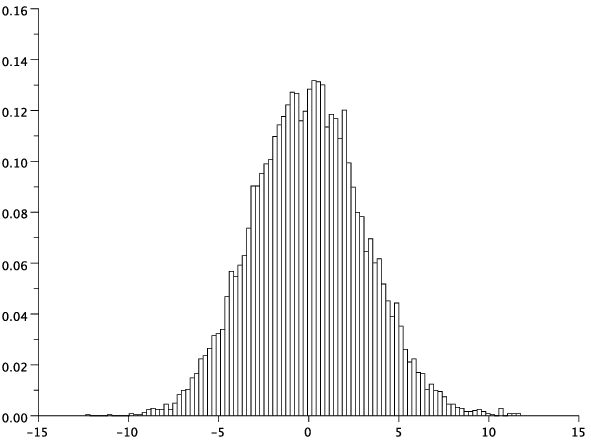}\vspace*{-3pt}

\caption{Histogram of the re-scaled fluctuations for $t=40$.}
\label{fighisto-3}
\end{figure}
%

%
%
\begin{figure}

\includegraphics{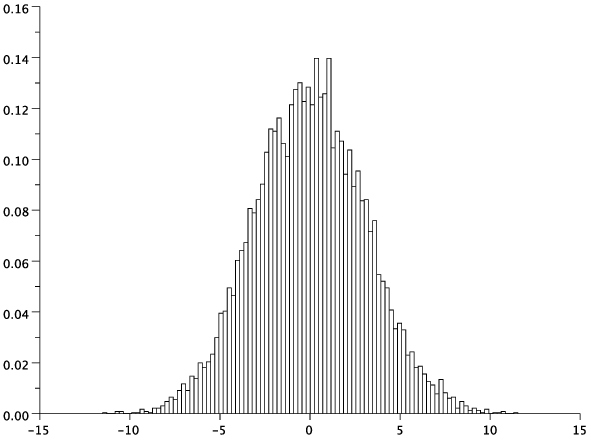}\vspace*{-3pt}

\caption{Histogram of the re-scaled fluctuations for $t=80$.}
\label{fighisto-4}
\end{figure}

%
\begin{figure}[b]

\includegraphics{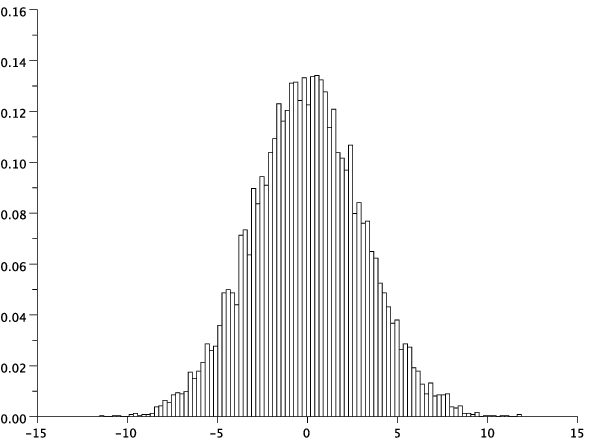}\vspace*{-3pt}

\caption{Histogram of the re-scaled fluctuations for $t=160$.}
\label{fighisto-5}
\end{figure}

%
\begin{figure}

\includegraphics{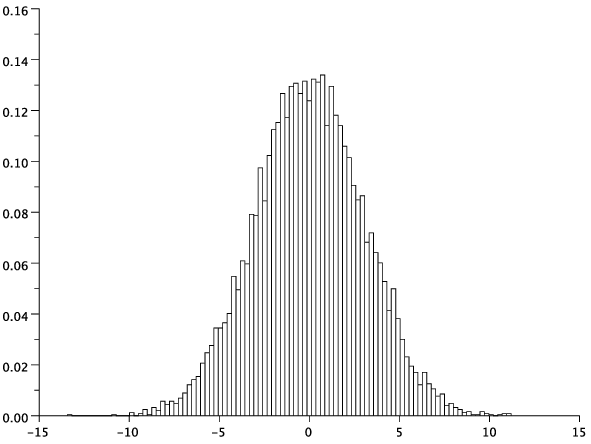}

\caption{Histogram of the re-scaled fluctuations for $t=320$.}
\label{fighisto-6}
\end{figure}
To conclude this article, let us quickly compare the Monte Carlo
approach under consideration here to other approaches to approximate
homogenized coefficients.
Another possibility to approximate effective coefficients is to
directly solve the so-called corrector equation.
In this approach, a first step toward the derivation of error estimates
is a quantification of the qualitative results proved by
K\"unnemann~\cite{Kunnemann-83}
(and inspired by Papanicolaou and Varadhan's treatment of the
continuous case~\cite{Papanicolaou-Varadhan-79}) and Kozlov~\cite{Kozlov-87}.
In the stochastic case, such
an equation is posed on the whole $\Z^d$, and we need to localize it
on a bounded domain, say the hypercube $Q_R$ of side $R>0$.
As shown in a series of papers by Otto and the first author \cite
{Gloria-Otto-09,Gloria-Otto-09b}, and the first author~\cite{Gloria-10},
there are three contributions to the $\LL^2$-error in probability
between the true homogenized coefficients and its approximation.
The dominant error in small dimensions takes the form of a variance: it
measures the fact that the approximation of the homogenized coefficients
by the average of the energy density of the corrector on a box $Q_R$
fluctuates. This error
decays at the rate of the central limit theorem $R^{-d}$ in any
dimension (with a logarithmic
correction for $d=2$). The second error is a systematic error: it is
due to the fact that we have modified the corrector equation
by adding a zero-order term of strength $T^{-1}>0$ (as is standard in
the analysis of the well-posedness of the corrector equation).
The scaling of this error depends on the
dimension and saturates at dimension $4$. It is of higher order than
the random error up to dimension $8$. The last error is due to the use
of boundary conditions on the bounded domain $Q_R$. Provided there is a
buffer region, this error is exponentially small in the distance to the buffer
zone measured in units of $\sqrt{T}$.

%
\begin{table}[b]
\caption{Empirical variance of $\frac{\mathbb{E}[p]}{2} t(\hat
A_{t^2}(t) -\sigma_t^2)$ and limiting variance from
Proposition \protect\ref{pclt}}\label{tabvar}
\begin{tabular*}{\tablewidth}{@{\extracolsep{\fill}}lccccccc@{}}
\hline
$t$ & 10 & 20 & 40 & 80 & 160 & 320 & $\infty$
\\[4pt]
Variance & 9.86 & 9.46 & 9.49 & 9.46 & 9.36 & 9.06 & 9.08\\
\hline
\end{tabular*}
\end{table}

This approach has two main drawbacks. First, the numerical method only
converges at the central limit theorem (CLT) scaling in terms of $R$ up
to dimension
$8$, which is somehow disappointing from a conceptual point of view
(although this is already fine in practice). Second, although the size
of the buffer zone is roughly independent of the dimension, its cost
with respect to the central limit theorem scaling dramatically
increases with the
dimension (recall that in dimension $d$, the CLT scaling is $R^{-d}$,
so that in high dimension, we may consider smaller $R$ for a given precision,
whereas the use of boundary conditions requires $R\gg\sqrt{T}$ in any
dimension).
Based on ideas of the second author in~\cite{Mourrat-10}, we have
taken advantage of the spectral representation of the homogenized coefficients
(originally introduced by Papanicolaou and Varadhan to prove their
qualitative homogenization result) in order to devise and analyze new
approximation formulas for the homogenized coefficients in \cite
{Gloria-Mourrat-10}. In particular, this has allowed us to get rid of
the restriction
on dimension, and exhibit refinements of the numerical method of \cite
{Gloria-10} which converge at the central limit theorem scaling in any
dimension (thus avoiding the first mentioned drawback).
Unfortunately, the second drawback is inherent to the type of method
used: if the corrector equation has to be solved
on a bounded domain $Q_R$, boundary conditions need to be imposed on
the boundary $\partial Q_R$.
Since their values are actually also part of the problem, a buffer zone
seems mandatory---with the notable exception of the periodization
method, whose analysis is
yet still unclear to us, especially when spatial correlations are
introduced in the coefficients.

In this paper we have analyzed a method which does not suffer from
the drawbacks mentioned above: the random
walk in random environment approach.
In particular, following~\cite{Papanicolaou-83} we have obtained an
approximation of the homogenized coefficients by the numerical
simulation of a random walk up to some large time.
Compared to the deterministic approach based on the approximate corrector
equation, the advantage of the present approach is that its convergence
rate and computational costs are dimension-independent.
In addition, the environment only needs to be generated along the trajectory
of the random walker, so that much less information has to be stored
during the calculation. This may be quite an important feature of the
Monte Carlo method
in view of the discussion of~\cite{Gloria-10}, Section 4.3.

A more thorough comparison of these numerical approaches in two and
three dimensions, for correlated and uncorrelated examples, will be the object
of a forthcoming work~\cite{EGMN}.

%
%


%

\printaddresses

\end{document}